\theoremstyle{plain}
\newtheorem{theorem}{Theorem}
\newtheorem{lemma}[theorem]{Lemma}
\newtheorem{proposition}[theorem]{Proposition}
\newtheorem{corollary}[theorem]{Corollary}
\numberwithin{theorem}{section}
\numberwithin{equation}{theorem}
\theoremstyle{definition}
\newtheorem{definition}[theorem]{Definition}
\newtheorem{example}[theorem]{Example}
\newtheorem{remark}[theorem]{Remark}
\newtheorem*{question*}{Question}
\DeclareMathOperator{\fpdim}{fpd}
\DeclareMathOperator{\Ext}{Ext}
\DeclareMathOperator{\Hom}{Hom}
\begin{document}

\title{Frobenius-Perron theory of the bound quiver algebras containing loops}

\author{J.M.Chen and J.Y.Chen$^*$}

\thanks{$^*$ the corresponding author}
\subjclass[2000]{Primary 18E30, 16G60, 16E10, Secondary 16E35}
\date{\today}
\keywords{Frobenius-Perron dimension, representation-directed algebra, canonical algebra, bound quiver algebras containing loops}%

\begin{abstract}
The {Frobenius-Perron dimension of a matrix, also known as spectral radius}, is a useful tool for studying linear algebras and plays an important role in the classification of the representation categories of algebras.  In this paper, we study the Frobenius-Perron theory of the representation categories of bound quiver algebras containing loops, and find a way to calculate the Frobenius-Perron dimension of these algebras when they satisfy the commutativity condition of loops. As an application, we prove that the Frobenius-Perron dimension of the representation category of a modified ADE bounded quiver algebra is equal to the maximum number of loops at each vertex. Finally, we point out that there also exists infinite dimensional algebras whose Frobenius-Perron dimension is equal to the maximal number of loops by giving an example.
\end{abstract}

%\thanks{ }

\maketitle

%\tableofcontents

\setcounter{section}{0}
\section{Introduction}
\label{xxsec0}

%\today

%{\bf (preliminary version, comments welcome, please do not distribute)}

\bigskip

The spectral radius (also called the Frobenius-Perron dimension) of a
matrix is an elementary and extremely useful invariant in linear algebra,
combinatorics, topology, probability and statistics. For instance, we can classify all the finite graphs which are simple and connected by applying the spectral radius to adjacency matrix of them \cite{DG}.

The Frobenius-Perron dimension of an object in a semisimple finite tensor (or
fusion) category was introduced by Etingof-Nikshych-Ostrik in 2005 \cite{ENO} (also see \cite{EG, EGO, N}). Since then
it has become an extremely useful invariant in the study of fusion categories and
representations of semismiple (weak and/or quiasi-)Hopf algebras.

In 2017, the Frobenius-Perron dimension of an endofunctor of a category was
introduced by the authors in \cite{CG1}. It can be viewed as a
generalization of the Frobenius-Perron dimension of an object in
a fusion category introduced by Etingof-Nikshych-Ostrik \cite{ENO}. It was
shown in \cite{CG1, CG2, ZZ} that the Frobenius-Perron dimension has
strong connections with the representation type of a category.

To gain a better understanding of the Frobenius-Perron dimension of an endofunctor, Wicks \cite{W} calculated the Frobenius-Perron dimension of the representation category of a modified ADE bounded quiver algebra with arrows in a certain direction. It showed that the Frobenius-Perron dimension of this category was equal to the maximum number of loops at a vertex, and asked what would happen if the directions of the arrows were changed.

In this paper, we study the Frobenius-Perron theory of the representation categories of the bound quiver algebras containing loops.  As we know that a bound quiver algebra containing loops is representation-infinite and it is hard to describe the homomorphism spaces (or extension spaces) between objects in representation category completely. We focus on these algebras when they satisfy the commutativity condition of loops (see Definition \ref{def3.1}).
Let $A$ be a bound quiver algebras satisfying the commutativity condition of loops and {$B=A/J$ be its quotient algebra where $J$ }is the ideal generated by all the loops. We show that we can describe the extension spaces in $A$ through the one in $B$ in certain extend. Inspired by the work in \cite{CC}, which we  calculated the Frobenius-Perron dimension of the representation categories of representation-directed algebras, we consider the case of $B$ is a representation-directed algebra, and prove that the Frobenius-Perron dimension of $A$ is equal to the maximal number of loops at each vertex. As an application, we confirm that the Frobenius-Perron dimension of the representation category of a modified ADE {bound} quiver algebra is equal to the maximal number of loops at each vertex no matter what the directions of arrows we choose, which {presents} an explicit answer to the question asked in \cite{W}.  We further discuss the case of $B$ is a canonical algebra of type ADE in this paper, and prove that the Frobenius-Perron dimension of $A$ fall in an interval with length less than 1. At last, we show that there also exists infinite dimensional algebras whose Frobenius-Perron dimension is equal to the maximal number of loops by calculating the Frobenius-Perron dimension of {the representation categories of polynomial algebras.}

\subsection{Conventions}
\label{xxsec0.8}
\begin{enumerate}
\item[(1)]
Throughout let $\Bbbk$ be an algebraically closed field, and let everything
be over the field $\Bbbk$.
\item[(2)]
Usually $Q$ means a finite connected quiver.
\item[(3)]
If $A$ is an algebra over the base field $\Bbbk$, then we denote by $A$-mod
the category of finite dimensional left $A$-modules.
\end{enumerate}

\bigskip

The paper is organized as follows. In Section 1, we introduce the background and summarize the main work of this paper. In Section 2, we review the definition of Frobenius-Perron dimension of a $\Bbbk$-linear category.
In Section 3, we study the loop-extended algebras (see Definition \ref{def3.1}) and describe the properties of the extension spaces over the representation categories of these algebras.
%From Section 4 to Section 6, we study the Frobenius-Perron dimension of the representation categories of bound quiver containing loops,
In Section 4, we find a way to obtain the Frobenius-Perron dimension of loop-extended algebras of representation-directed algebras which include ADE quiver algebras as special cases. In Section 5, we study the Frobenius-Perron dimension of a tube. In Section 6, we calculate the Frobenius-Perron dimension of loop-extended algebras of canonical algebras and give some examples. In Section 7, we give the Frobenius-Perron dimension of the representation categories of the polynomial algebras. The following two theorems are main results of this paper which are proved in Theorem \ref{xxthm2.3}, Theorem \ref{xxthm3.2} and Theorem \ref{thm6.3}.

\begin{theorem}
\label{thm1.1}
Let $A=\Bbbk Q/I$ be the bound quiver algebra of a finite quiver $Q$, where $I$ is an admissible ideal satisfying the commutativity condition of loops. $B$ is the loop-reduced algebra of $A$ (see Definition \ref{def3.1}). Then the following hold.

$(1)$ If $M,N$ are two $B$-modules with $\Hom_B(M,N)=0$, then \[
\Ext_A^1(M,N)\cong\Ext_B^1(M,N).
\]

$(2)$ If $M$ is a brick $B$-module which is not simple, then \[
\Ext_A^1(M,M)\cong\Ext_B^1(M,M).
\]
\end{theorem}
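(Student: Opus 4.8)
The key structural input is the commutativity condition of loops: if $A = \Bbbk Q/I$ and $J \subseteq A$ is the ideal generated by all loops, then $B = A/J$ and, crucially, $J$ is generated by central-ish elements in the sense that loops commute with everything in the relevant sense. So I want to exploit that every $A$-module $M$ carries, for each vertex $i$ with loops, commuting nilpotent operators coming from the loops at $i$, and that a $B$-module is precisely an $A$-module on which all these operators vanish.

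**Step 1: A projective (or free) resolution bookkeeping.** Given a $B$-module $M$, I would compute $\Ext^1_A(M,N)$ using a projective $A$-resolution of $M$. The natural thing is: take a projective $B$-resolution $P_1 \to P_0 \to M \to 0$, lift the projective $B$-modules to projective $A$-modules $\widetilde P_0, \widetilde P_1$, and measure the obstruction to lifting the maps. Since $M$ is killed by $J$, the map $\widetilde P_0 \to M$ has kernel containing $J\widetilde P_0$, and one gets an exact sequence relating $\Omega^1_A M$ to $\Omega^1_B M$ via a term built from $J \widetilde P_0 / (\text{relations})$, which is controlled by the loops tensored with $M$. Concretely I expect a short exact sequence of the form $0 \to (\text{loop part}) \to \Omega_A^1 M \to \Omega_B^1 M \to 0$, and the loop part, as a $B$-module, is a direct sum of copies of $S_i \otimes (\text{loops at } i)$-type contributions — more precisely it is built from the quotients $M_i := e_i M$ for $i$ a looped vertex. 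Then $\Ext^1_A(M,N) = \Hom_A(\Omega^1_A M, N)/(\text{im})$, and applying $\Hom_A(-,N)$ to that short exact sequence gives a six-term sequence. Because $N$ is also a $B$-module, $\Hom_A$ out of the loop part lands in $\Hom_B(\text{semisimple-ish pieces}, N)$, i.e. is controlled by $\Hom_B(M,N)$-type data.

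**Step 2: Kill the error terms using the hypotheses.** For part (1), the hypothesis $\Hom_B(M,N) = 0$ should force the connecting/error terms in that long exact sequence to vanish: the loop-part contributions to $\Ext^1_A$ are built from maps $M \to N$ (twisted by loops) which all factor through $\Hom_B(M,N) = 0$. Dually the correction to going from $\Ext^1_B$ to $\Ext^1_A$ involves $\Hom_B(M,N)$ in the connecting map from $\Hom$ to $\Ext^1$, which again vanishes. So one gets $\Ext^1_A(M,N) \cong \Ext^1_B(M,N)$ directly. For part (2), with $M = N$ a non-simple brick, $\Hom_B(M,M) = \Bbbk$ is one-dimensional, so the error terms are at worst one-dimensional and I must show the relevant maps involving that copy of $\Bbbk$ (the identity) are actually isomorphisms onto the loop-part contributions, so that everything still cancels. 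The non-simplicity is used to ensure that the loop operators on $M$, being nilpotent $B$-module-endomorphism-like data but $M$ having no nontrivial $B$-endomorphisms, must act as $0$ — wait, more carefully: a loop at $i$ gives an endomorphism of the restriction data but need not be a $B$-endomorphism; non-simplicity rules out the degenerate case where $M$ is a simple at a looped vertex and the loop part would genuinely change $\Ext^1$. This is exactly where the brick + non-simple hypothesis is consumed.

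**Step 3: Identify the comparison map functorially.** I would check that the isomorphisms constructed are the natural ones induced by restriction of scalars along $A \twoheadrightarrow B$ (inflation in cohomology), so that "$\cong$" is canonical, not just a dimension count — this matters for compatibility with the later Frobenius-Perron computations in Section 4.

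**Main obstacle.** The hard part is controlling the loop-contribution term in $\Omega^1_A M$ precisely enough: a priori it is some subquotient of $J\widetilde P_0$ whose $B$-module structure depends on how the loops interact with the other relations in $I$. The commutativity condition of loops is precisely what is supposed to tame this — it should let me present the loop part as an explicit functor of $M$ (roughly, $\bigoplus_i \ell_i \cdot e_i M$ where $\ell_i$ is the number of loops at $i$, modulo the sub-relations among loops), with $B$-module structure that is a quotient of $M$ itself. Verifying that this functor takes values in quotients/subs of $M$ (so that $\Hom_B$ out of it is controlled by $\Hom_B(M,-)$ and $\Hom_B(-,M)$) and that the connecting maps are what one expects is the technical heart; I would isolate it as a lemma on the structure of $\Omega^1_A M$ for $B$-modules $M$, proved by a direct diagram chase with the lifted resolution, and then parts (1) and (2) are short corollaries.
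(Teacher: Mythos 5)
Your change-of-rings/syzygy strategy is genuinely different from the paper's proof, and for part (1) it can be pushed through: with $\widetilde P_0\to M$ an $A$-projective cover one has $0\to J\widetilde P_0\to \Omega^1_AM\to \Omega^1_BM\to 0$, and conditions (a),(b) of the commutativity condition force every non-loop arrow to annihilate $J\widetilde P_0$, so any map from $J\widetilde P_0$ to a $B$-module factors through $J\widetilde P_0/J^2\widetilde P_0\cong \bigoplus_P S_P^{\oplus N_P}\otimes_\Bbbk e_P(M/\operatorname{rad}M)$. Note that the loop part is governed by the \emph{top} of $M$ at each looped vertex, not by $e_PM$ as in your heuristic; the correct identification makes it a semisimple quotient of a direct sum of copies of $M$, so $\Hom$ out of it embeds into copies of $\Hom_B(M,N)$, and (1) follows. (The paper instead argues with no homological machinery: if a loop $\gamma$ at $P$ acts nontrivially on the middle term $L$ of $0\to N\to L\to M\to 0$, then $L_\gamma$ kills the submodule $N$ and lands in it, so its only possibly nonzero block is a corner map $M_P\to N_P$, and the relations $\beta\gamma,\gamma\alpha\in I$ make this corner a $B$-morphism $M\to N$ concentrated at $P$ — contradicting $\Hom_B(M,N)=0$ in (1), and producing a nonzero non-invertible endomorphism of the non-simple brick $M$ in (2). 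Your approach buys a cleaner functorial statement; the paper's is more elementary.)

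The genuine gap is in your part (2). The error term is not ``at worst one-dimensional because $\End_B(M)=\Bbbk$,'' and nothing ``cancels'': the comparison sequence reads $0\to\Ext^1_B(M,M)\to\Ext^1_A(M,M)\to\Hom_B(\text{loop part},M)$, so what must be shown is that the image of $\Ext^1_A(M,M)$ in the last term vanishes (in fact the whole term vanishes), not that some map is an isomorphism onto it. Your aside about the loop operators on $M$ is also off target: they are zero by definition since $M$ is a $B$-module; the obstruction lives on the middle term of the extension. The missing argument is this: since the loop part is a semisimple quotient of copies of $M$, any $B$-map from it to $M$ yields, after precomposition with $M\twoheadrightarrow(\text{loop part component})$, an endomorphism of $M$ whose image is semisimple; if it were invertible, $M$ would be semisimple, and a semisimple brick is simple, contradicting non-simplicity; hence the endomorphism is a non-unit in $\End_B(M)=\Bbbk$ and therefore $0$. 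With that inserted (and the top-of-$M$ identification of the loop part proved, which is exactly where conditions (a),(b) enter), your plan does prove (2); as written, that step would fail.
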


\begin{theorem}
Keep the notation as in Theorem \ref{thm1.1}. Then the following hold.

$(1)$ If $B$ is representation-directed, then\[
\fpdim{(A{\text -{\rm mod}})}=\max_{P\in Q_0}N_P
\]where $N_P$ is the number of loops at $P$.

$(2)$ If $B$ is a canonical algebra of type {\rm ADE}
%or tubular case
, then\[
\fpdim{(A{\text -{\rm mod}})}\in [n_{max},n_{max}+1)
\]where $n_{max}$ is the maximal number of loops at each vertex in the quiver.
\end{theorem}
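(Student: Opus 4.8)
The plan is to compute $\fpdim(A\text{-mod})$ by separately establishing a lower bound and an upper bound, using Theorem \ref{thm1.1} to transfer extension information between $A$ and $B$. For the lower bound in both cases, the key observation is that each loop $\ell$ at a vertex $P$ contributes to $\Ext^1_A(S_P,S_P)$, where $S_P$ is the simple module at $P$; since $S_P$ is a simple $B$-module, Theorem \ref{thm1.1}(1) does not directly apply (it requires $\Hom_B(M,N)=0$ and $\Hom_B(S_P,S_P)=\Bbbk$), so I would instead argue directly from the quiver-with-relations presentation that $\dim\Ext^1_A(S_P,S_P)=N_P$. Choosing a vertex $P$ realizing $N_{\max}$ and feeding the endofunctor coming from the appropriate brick (or the $\Ext$-bimodule structure used in the definition of $\fpdim$ recalled in Section 2) shows $\fpdim(A\text{-mod})\ge N_{\max}$. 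This part is essentially the same in cases (1) and (2).

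For part (1), the upper bound is where the representation-directed hypothesis on $B$ does the work. Following the strategy of \cite{CC}, I would take an arbitrary finite-dimensional $A$-module $X$ and use the commutativity condition of loops to relate its composition-series data to that of $B$-modules; because $B$ is representation-directed, its Auslander--Reiten quiver has no oriented cycles and there are only finitely many indecomposables, each a brick with a well-understood $\Hom$/$\Ext$ pattern. Theorem \ref{thm1.1}(1)--(2) then says that for $B$-modules $M,N$ with $\Hom_B(M,N)=0$, and for non-simple bricks, the $A$-extensions are no bigger than the $B$-extensions; combined with the bound on how many loops can act, this caps the relevant "average extension" quantity that defines $\fpdim$ by $N_{\max}$. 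The main obstacle here is bookkeeping: one must show that the contributions from modules that are \emph{not} bricks, or pairs with $\Hom_B(M,N)\ne 0$, cannot push the Frobenius-Perron dimension above $N_{\max}$ — this requires the directedness to rule out the "feedback loops" in the $\Ext$-quiver that would otherwise inflate the spectral radius, exactly as in the representation-directed case of \cite{CC}.

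For part (2), $B$ is a canonical algebra of type ADE, so $B$ is tilted of Dynkin type and its module category, while still having finitely many indecomposables (ADE canonical algebras are representation-finite), is organized into tubes/a tilting-theoretic picture; here I would use the computation of the Frobenius-Perron dimension of a tube from Section 5 as a black box. The idea is that the $B$-side contributes a "defect" to the spectral-radius estimate coming from the tubular/regular part, but this defect is strictly less than $1$ — the Section 5 tube computation should give a bound of the shape $n_{\max} + \varepsilon$ with $0 \le \varepsilon < 1$. Combining the lower bound $\fpdim(A\text{-mod})\ge n_{\max}$ with this strict upper bound $\fpdim(A\text{-mod}) < n_{\max}+1$ yields the claimed interval. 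The hard part of (2) is making the tube estimate from Section 5 interact correctly with the loop-extension: one needs the commutativity condition to guarantee that the loops act "diagonally" enough that the $\Ext^1_A$-quiver decomposes as (loops at vertices) $\sqcup$ (the $B$-part controlled by the tube bound), so that the spectral radius of the whole is the max of $N_{\max}$ and something in $[n_{\max}, n_{\max}+1)$, hence still in that interval.
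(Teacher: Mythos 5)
Your plan for part (1) follows the paper's route (reduce to brick sets via the brick correspondence and Theorem \ref{thm1.1}, get the diagonal from $\dim\Ext^1_A(S_P,S_P)=N_P$), but it omits the one step that actually produces the upper bound and adds an irrelevant worry. Since $\fpdim$ is defined as a supremum over brick sets only, there is no ``bookkeeping'' to do for non-bricks or for pairs with $\Hom_B(M,N)\ne 0$: such objects and pairs never enter any adjacency matrix. What is needed, and what you only assert by analogy with \cite{CC}, is that for representation-directed $B$ every brick-set adjacency matrix over $B$ is nilpotent. The paper's proof of Theorem \ref{xxthm3.2} gets this from Auslander--Reiten duality, relating $\Ext^1_B(M_i,M_j)$ to $\Hom_B(M_j,\tau M_i)$, and from the almost split sequence ending at $M_i$: a cyclic pattern of nonvanishing $\Ext^1$'s would yield a cycle of indecomposables, contradicting directedness, so after a permutation the matrix is strictly upper triangular. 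Only then do Theorem \ref{thm1.1}(1)(2) give that the $A$-matrix is triangular with diagonal entries $0$ (non-simple bricks) or $N_P$ (simples), whence the spectral radius of every brick set is at most $\max_P N_P$, with equality realized by the singleton $\{S_P\}$. Without the AR-duality/cycle argument your ``directedness rules out feedback loops'' is a statement of the goal, not a proof.

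Part (2) contains genuine errors. First, canonical algebras of type ADE are \emph{not} representation-finite: they are tilted from tame hereditary algebras of extended Dynkin type, and their module categories decompose as $\mathcal{P}\vee\mathcal{R}\vee\mathcal{I}$ where $\mathcal{R}$ is a family of tubes, each containing infinitely many indecomposables. Indeed, the presence of tubes is exactly why the answer in (2) is an interval rather than $n_{max}$ on the nose. Second, and more seriously, your proposed decomposition --- that the $\Ext^1_A$-quiver splits as (loops at vertices) $\sqcup$ (the $B$-part bounded by the tube computation), so the spectral radius is the maximum of $N_{max}$ and a tube-type quantity --- is false. The loops contribute diagonal entries to the \emph{same} adjacency matrices whose off-diagonal part is the cyclic tube pattern of Lemma \ref{lem5.1}, and the resulting spectral radius genuinely exceeds $n_{max}$ in general (e.g.\ the value $\tfrac{n_{max}+\sqrt{n_{max}^2+4}}{2}$ in Theorem \ref{thm6.3}(2)); in particular Corollary \ref{cor5.2} ($\fpdim(\mathcal{T})=1$) used as a black box bounds nothing here. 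The substance of (2) is the strict upper bound $\fpdim(A\text{-mod})<n_{max}+1$, and this requires an estimate for matrices of the form ``cyclic permutation plus diagonal entries $\le n_{max}$,'' i.e.\ for the largest root of $\prod_i(x-d_i)-1$ with $d_i\le n_{max}$, which is the content of the paper's Lemma \ref{lem6.1} and is absent from your plan. Your lower bound $\fpdim(A\text{-mod})\ge n_{max}$ via $\{S_P\}$ is fine, but the upper half of the interval is not established by your argument.
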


\section{Preliminaries}
\label{xxsec1}

\subsection{$\Bbbk$-linear categories}
\label{xxsec1.2}
If ${\mathcal C}$ is a $\Bbbk$-linear category, then
$\Hom_{\mathcal C}(M,N)$ is a $\Bbbk$-linear space for all
objects $M,N$ in ${\mathcal C}$. If ${\mathcal C}$ is
also abelian, then $\Ext^i_{\mathcal C}(M,N)$ are
$\Bbbk$-linear spaces for all $i\geq 0$. Let $\dim$ be the
$\Bbbk$-vector space dimension.

Throughout the rest of the paper, let ${\mathcal C}$ denote a
$\Bbbk$-linear category. A functor between two $\Bbbk$-linear
categories is assumed to preserve the $\Bbbk$-linear structure.
For simplicity,
$\dim(M,N)$ stands for $\dim \Hom_{\mathcal C}(M,N)$ for any
objects $A$ and $B$ in ${\mathcal C}$.

The set of finite subsets of nonzero objects in ${\mathcal C}$
is denoted by $\Phi$ and the set of subsets of $n$ nonzero objects
in ${\mathcal C}$ is denoted by $\Phi_n$ for each $n\geq 1$.
It is clear that $\Phi=\bigcup_{n\geq 1} \Phi_n$. We do not
consider the empty set as an element of $\Phi$.

\begin{definition}\cite[Definition 2.1]{CG1}
\label{xxdef2.1}
Let ${\mathcal C}$ be a $\Bbbk$-linear abelian category, $\phi:=\{X_1, X_2, \cdots,X_n\}$ be a finite subset of nonzero
objects in ${\mathcal C}$, namely, $\phi\in \Phi_n$.
\begin{enumerate}
\item[(1)]
The {\it adjacency matrix} of $\phi$ is defined to be
$$C(\phi):=(a_{ij})_{n\times n}, \quad
{\text{where}}\;\; a_{ij}:=\dim\Ext^1_{\mathcal C}(X_i, X_j) \;\;\forall i,j.$$
\item[(2)]
An object $M$ in ${\mathcal C}$ is called a {\it brick}
if
\begin{equation}
\notag%\label{E2.1.1}\tag{E2.1.1}
\Hom_{\mathcal C}(M,M)=\Bbbk.
\end{equation}

\item[(3)]
$\phi\in \Phi$ is called a {\it brick set}   if each $X_i$ is a brick
and
$$\dim(X_i, X_j)=\delta_{ij}$$
for all $1\leq i,j\leq n$. The set of brick
$n$-object subsets is denoted by $\Phi_{n,b}$ . We write $\Phi_{b}=\bigcup_{n\geq 1}
\Phi_{n,b}$.
\end{enumerate}
\end{definition}

Let $C$ be an $n\times n$-matrix over complex field ${\mathbb C}$.
The {\it spectral radius} of $C$ is defined to be
$$\rho(C):=\max\{ |r_1|, |r_2|, \cdots, |r_n|\}\quad \in {\mathbb R}$$
where $\{r_1,r_2,\cdots, r_n\}$ is the complete multi-set of eigenvalues of $C$.

\begin{definition}\cite[Definition 2.3]{CG1}
\label{xxdef2.3}
Retain the notation as in Definition \ref{xxdef2.1}, and
we use $\Phi_{b}$ as the testing objects.
\begin{enumerate}
\item[(1)]
The {\it $n$th Frobenius-Perron dimension} of $\mathcal{C}$ is defined to be
$$\fpdim^n (\mathcal{C}):=\sup_{\phi\in \Phi_{n,b}}\{\rho(C(\phi))\}.$$
If $\Phi_{n,b}$ is empty, then by convention, $\fpdim^n(\mathcal{C})=0$.
\item[(2)]
The {\it Frobenius-Perron dimension} of $\mathcal{C}$ is defined to be
$$\fpdim (\mathcal{C}):=\sup_n \{\fpdim^n(\mathcal{C})\}
=\sup_{\phi\in \Phi_{b}} \{\rho(C(\phi)) \}.$$
\end{enumerate}
\end{definition}

%For simplicity, ``Frobenius-Perron'' is abbreviated to ``fp''.

\subsection{Representation of bound quivers}

Let $Q$ be a finite connected quiver and $I$ be an admissible ideal of $\Bbbk Q$. Then $(Q,I)$ is called a {\it bound quiver} and $A=\Bbbk Q/I$ is the {\it bound quiver algebra} of $Q$ with respect to $I$. A {\it representation} of $(Q,I)$ is a tuple $M:=(M_a,M_\alpha)_{a\in Q_0,\alpha\in Q_1}$, satisfying

$(R1)$ To each point $a$, $M_a$ is a finite dimensional $\Bbbk$-vector space;

$(R2)$ To each arrow $\alpha:a\rightarrow b$, $M_\alpha$ is a $\Bbbk$-linear map from $M_a$ to $M_b$;

$(R3)$ If $\sum\limits_{i=1}^m \lambda_i\alpha_{i,1}\cdots\alpha_{i,n_i} $ belongs to $I$, where $\lambda_i\in \Bbbk$ (not all zero), then it implies $\sum\limits_{i=1}^m \lambda_i M_{\alpha_{i,1}}\cdots M_{\alpha_{i,n_i}}=0$.\\
Assume $M=(M_a,M_\alpha)$ and $N=(N_a,N_\alpha)$ are two representations of $(Q,I)$, a morphism $f:M\rightarrow N$ from $M$ to $N$ is a tuple $f=(f_a: M_{a}\rightarrow N_{a})_{a\in Q_0}$ of $\Bbbk$-linear maps such that $f_b\circ M_\alpha=N_\alpha\circ f_a$ hold for each arrow $\alpha:a\rightarrow b$.

Denote by rep$(Q,I)$ the category of representations of $(Q,I)$. The following theorem is from \cite{ASS}.

\begin{theorem}
\cite[Ch.\uppercase\expandafter{\romannumeral3}, Theorem 1.6] {ASS}
Let $(Q,I)$ be a bound quiver and $A=\Bbbk Q/I$ be the bound quiver algebra of $Q$ with respect to $I$. There exists a $\Bbbk$-linear equivalence of categories\[
F:{A{\text -{\rm mod}}}\Tilde{\longrightarrow} {\rm rep}(Q,I).
\]
\end{theorem}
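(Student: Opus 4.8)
The plan is to exhibit an explicit quasi-inverse pair of functors. First I would construct $F\colon A\text{-}\mathrm{mod}\to\mathrm{rep}(Q,I)$ as follows. Since $I$ is admissible, $A=\Bbbk Q/I$ is finite dimensional and the residue classes $e_a$ (for $a\in Q_0$) of the trivial paths form a complete set of orthogonal idempotents with $\sum_{a\in Q_0}e_a=1$. Given a finite dimensional left $A$-module $M$, set $M_a:=e_aM$; for an arrow $\alpha\colon a\to b$, viewed in $A$ as $\alpha=e_b\alpha e_a$, let $M_\alpha\colon M_a\to M_b$ be left multiplication by $\alpha$. For a homomorphism $g\colon M\to N$ put $g_a:=g|_{M_a}$; since $g$ is $A$-linear, $g_a$ lands in $e_aN=N_a$ and $g_bM_\alpha=N_\alpha g_a$. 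The module axioms force $(R3)$: if $\sum_i\lambda_i p_i\in I$ with the $p_i$ paths, then this element is zero in $A$ and hence annihilates $M$, which is exactly the identity $\sum_i\lambda_i M_{\alpha_{i,1}}\cdots M_{\alpha_{i,n_i}}=0$ on the relevant summand. Finiteness of $Q_0$ and of $\dim_\Bbbk M$ gives $(R1)$. Thus $F$ is a well-defined $\Bbbk$-linear functor.

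Next I would build $G\colon\mathrm{rep}(Q,I)\to A\text{-}\mathrm{mod}$ in the reverse direction. Given a representation $(M_a,M_\alpha)$, set $M:=\bigoplus_{a\in Q_0}M_a$ and define an action of the path algebra $\Bbbk Q$ on $M$: a nontrivial path $p=\alpha_n\cdots\alpha_1$ from $a$ to $b$ acts as the composite $M_{\alpha_n}\cdots M_{\alpha_1}$ on the summand $M_a$ and by zero on the other summands, while the trivial path $e_a$ acts as the projection onto $M_a$; then extend $\Bbbk$-linearly. Because $\Bbbk Q$ has the paths as a $\Bbbk$-basis, this is a genuine $\Bbbk Q$-module structure, and $(R3)$ says precisely that the ideal $I$ acts by zero, so the action descends to $A=\Bbbk Q/I$. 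A morphism of representations $(f_a)$ assembles into the $\Bbbk$-linear map $\bigoplus_a f_a\colon\bigoplus_a M_a\to\bigoplus_a N_a$, and the intertwining relations $f_bM_\alpha=N_\alpha f_a$ promote it to an $A$-module homomorphism. So $G$ is a $\Bbbk$-linear functor.

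Finally I would check that $F$ and $G$ are mutually quasi-inverse. For an $A$-module $M$, the canonical decomposition $M=\bigoplus_{a\in Q_0}e_aM$ arising from $1=\sum_a e_a$ gives an isomorphism $GF(M)\xrightarrow{\ \sim\ }M$ of $A$-modules, natural in $M$ since every $A$-linear map respects this idempotent decomposition. Conversely, for a representation $(M_a,M_\alpha)$, applying $F$ to $\bigoplus_a M_a$ recovers $e_a\bigl(\bigoplus_b M_b\bigr)=M_a$ at each vertex and recovers $M_\alpha$ at each arrow directly from the definition of the action, yielding a natural isomorphism $FG\cong\mathrm{id}$. Hence $F$ is an equivalence of $\Bbbk$-linear categories.

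The only genuine content lies in the two well-definedness checks — that left multiplication by arrows is compatible with the relations of $I$ when building $F$, and, dually, that the path-by-path assignment defining $G$ factors through $\Bbbk Q/I$ — together with the bookkeeping that the $e_a$ behave as a complete orthogonal family. None of this is a real obstacle; the rest is an unwinding of definitions, and the "hard part" is essentially cosmetic, namely keeping the two directions of the correspondence and the naturality squares straight.
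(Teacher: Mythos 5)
Your argument is correct and is precisely the standard proof of this equivalence: the paper itself gives no proof, citing \cite[Ch.~III, Theorem~1.6]{ASS}, and your construction of $F$ via $M_a=e_aM$ with arrows acting by left multiplication, the inverse $G$ via $\bigoplus_{a\in Q_0}M_a$ with paths acting through the composites $M_{\alpha_n}\cdots M_{\alpha_1}$, and the check that $I$ acts by zero is exactly the argument found there. The well-definedness checks and the quasi-inverse verification you outline are the whole content, so nothing is missing.
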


\section{Representation category of a bound quiver containing loops}

\begin{definition}
\label{def3.1}
Let $A=\Bbbk Q/I$ be the bound quiver algebra of a finite quiver $Q$, where $I$ is an admissible ideal. We say $I$ \emph{satisfies the commutativity condition of loops} if the following conditions hold.

$(a)$  For a path $\gamma\alpha$, if $\gamma$ is a loop and $\alpha$ is an arrow but not a loop, then  $\gamma\alpha$ belongs to $I$;

$(b)$  For a path $\beta\gamma$, if $\gamma$ is a loop and $\beta$ is an arrow but not a loop, then $\beta\gamma$ belongs to $I$;

$(c)$  For any two loops $\gamma_1,\gamma_2$ base at the same vertex, $\gamma_1\gamma_2-\gamma_2\gamma_1$ belongs to $I$.\\
In this case, let $J$ be the ideal of $A$ generated by all the loops. The quotient algebra $B:=A/J$ is called the {\it loop-reduced algebra} of $A$ and we call $A$ a {\it loop-extended algebra} of $B$.
\end{definition}

In this paper, we only consider bound quiver algebras satisfying the commutativity condition of loops.
{The following proposition is from \cite{W}, for further application, we present a categorical proof here.}

\begin{proposition}(\cite[Proposition 3.4]{W})
\label{xxcor2.2}
Let $A=\Bbbk Q/I$ be the bound quiver algebra of $Q$, where $I$ is an admissible ideal of $\Bbbk Q$ satisfying the commutativity condition of loops. Assume $B$ is the loop-reduced algebra of $A$, then we get a one-to-one correspondence between the isomorphism classes below:\[
\{\text{brick}\ A\text{-modules}\} \leftrightarrow\{\text{brick}\ B\text{-modules}\}.
\]Moreover, for each two brick $B$-modules $M,N$, there exists a natural isomorphism\[
\Hom_A(M,N)\cong\Hom_{B}(M,N).
\]
\end{proposition}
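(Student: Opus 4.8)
The plan is to deduce the statement from the single observation that $B=A/J$ is a quotient of $A$, so that the inflation (restriction of scalars) functor
\[
\iota\colon B\text{-}\mathrm{mod}\longrightarrow A\text{-}\mathrm{mod}
\]
is fully faithful, together with a concrete description of its essential image. Under the equivalence $A\text{-}\mathrm{mod}\simeq \mathrm{rep}(Q,I)$, a $B$-module is precisely a representation $M=(M_a,M_\alpha)$ with $M_\gamma=0$ for every loop $\gamma$; and for two such representations a tuple $(f_a)$ is $A$-linear if and only if it is $B$-linear, because the extra compatibility conditions $f_p\circ M_\gamma=M_\gamma\circ f_p$ coming from loops are vacuous. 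This already yields the natural isomorphism $\Hom_A(M,N)\cong\Hom_B(M,N)$ for any two $B$-modules, in particular for bricks, so the remaining content is the bijection.

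The key step is the following lemma: for an arbitrary $A$-module $M$ and a loop $\gamma$ at a vertex $p$, the $\Bbbk$-linear endomorphism $\phi_\gamma$ of the underlying space of $M$ that acts as $M_\gamma$ on $M_p$ and as $0$ on $M_a$ for $a\neq p$ is an $A$-module endomorphism of $M$. To prove this I would check that $\phi_\gamma$ commutes with $M_\alpha$ for every arrow $\alpha$ of $Q$; this is automatic unless $\alpha$ is incident to $p$, and the three remaining cases are governed exactly by the commutativity condition of loops: a non-loop arrow entering $p$ gives $M_\gamma M_\alpha=0$ by Definition \ref{def3.1}(a) (resp.\ (b)), a non-loop arrow leaving $p$ gives $M_\alpha M_\gamma=0$ by (b) (resp.\ (a)), and another loop $\gamma'$ at $p$ gives $M_\gamma M_{\gamma'}=M_{\gamma'}M_\gamma$ by (c). I would also record that admissibility of $I$ forces $\gamma^{\,m}\in I$ for $m\gg 0$, so $M_\gamma$ is nilpotent.

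With the lemma in hand, let $M$ be a brick $A$-module. For each loop $\gamma$ at each vertex $p$ we get $\phi_\gamma=\lambda\cdot\mathrm{id}_M$ for some $\lambda\in\Bbbk$. If $M$ is supported at more than one vertex, then some $M_a$ with $a\neq p$ is nonzero and $\phi_\gamma|_{M_a}=0$ forces $\lambda=0$; if $M$ is supported only at $p$, then $\phi_\gamma=M_\gamma$ is nilpotent, which again forces $\lambda=0$. Hence $M_\gamma=0$ for all loops, so $M$ lies in the essential image of $\iota$, say $M\cong\iota(\bar M)$ for a $B$-module $\bar M$ that is unique up to isomorphism, and full faithfulness gives $\End_B(\bar M)\cong\End_A(M)=\Bbbk$, so $\bar M$ is a brick $B$-module. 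Conversely, $\iota$ carries brick $B$-modules to brick $A$-modules, again by full faithfulness. These two assignments are mutually inverse on isomorphism classes, which gives the claimed bijection, and on bricks the $\Hom$-identification of the first paragraph is the asserted natural isomorphism. I expect the only genuine obstacle to be the bookkeeping in the lemma — matching each type of arrow incident to $p$ with the correct clause of Definition \ref{def3.1} under the paper's composition convention — and, in the last step, remembering to treat separately the modules supported at a single vertex.
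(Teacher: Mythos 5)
Your proposal is correct and follows essentially the same route as the paper: the $\Hom$-isomorphism comes from fullness/faithfulness of restriction along the quotient $A\twoheadrightarrow B$, and the bijection on bricks comes from the endomorphism $\phi_\gamma$ (the paper's map $f$), shown to be $A$-linear via the commutativity condition and forced to vanish on a brick using nilpotency of $M_\gamma$ from admissibility. Your case split (multi-vertex support versus support at a single vertex) is just a slightly more explicit version of the paper's linear-independence argument.
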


\begin{proof}
Since $B$ is a quotient algebra of $A$, we have $\Hom_{A}(M,M)\cong\Hom_{B}(M,M)=k$ for any brick $B$-module $M$. So \{brick$\ B$-modules\} is a subset of \{brick$\ A$-modules\}. Conversely,  if there exists a brick $A$-module $N$ doesn't belong to the set \{brick$\ B$-modules\}, then we can find a vertex $P$ in $Q$ and a loop $\gamma_0$ at $P$ such that $N_{\gamma_0}\ne0$. Consider the map $f: N\rightarrow N$ defined as follow
\[
f_R=\begin{cases}
    0, R\in Q_{0}\backslash \{P\}\\
    N_{\gamma_0}, R=P.
\end{cases}
\]It is not hard to prove that $f$ is an endomorphism of $N$ because $\mathcal{I}$ satisfies the commutativity condition of loops. Since $N_{\gamma_0}$ is nilpotent due to $\mathcal{I}$ is admissible, $f$ is linearly independent of $1_N$ which implies $N$ is not a brick, contradict to the assumption. Therefore, there is a one-to-one correspondence between the isomorphism classes \{{\rm{brick}}$\ A$-{\rm{modules}}\} and \{{\rm{brick}}$ \ B$-{\rm{modules}}\}.

Furthermore, by viewing brick $B$-modules $M,N$ as brick $A$-modules, every $B$-module morphism from $M$ to $N$ can be seen as an $A$-module morphism. It is obvious that an $A$-module morphism $f:M\rightarrow N$ is well-defined as an $B$-module morphism. Thus, we have a natural isomorphism\[
\Hom_A(M,N)\cong\Hom_{B}(M,N).
\]\end{proof}

Proposition \ref{xxcor2.2} shows that if we want to calculate the Frobenius-Perron dimension of a bound quiver satisfying the commutativity condition, we need only consider the brick sets after removing loops. Therefore, the problems is how the extension spaces change when we remove the loops. {With pleasure,  we get the following important observation.}

\begin{theorem}
\label{xxthm2.3}
Keep the notation as in Proposition \ref{xxcor2.2}. Then the following hold.

$(1)$ If $M,N$ are two $B$-modules, and $\Hom_B(M,N)=0$, then we have\[
\Ext_A^1(M,N)\cong\Ext_B^1(M,N).
\]

$(2)$ If $M$ is a brick $B$-module and $M$ is not simple, then we have\[
\Ext_A^1(M,M)\cong\Ext_B^1(M,M).
\]
\end{theorem}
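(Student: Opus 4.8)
The plan is to realize both $\Ext^1_A$ and $\Ext^1_B$ via short exact sequences and to show that, under the stated hypotheses, every $A$-extension is forced to have trivial action by the loops, hence factors through $B$. Concretely, for part $(1)$, let $0\to N\to E\to M\to 0$ be a short exact sequence of $A$-modules. Since $B=A/J$ with $J$ generated by loops, it suffices to prove that $J$ annihilates $E$, i.e. that $E$ is a $B$-module; then the sequence lives in $B$-mod and the two Ext-groups are identified by the standard comparison (the inclusion $\Ext^1_B(M,N)\hookrightarrow\Ext^1_A(M,N)$ coming from restriction of scalars along $A\twoheadrightarrow B$, which one checks is well-defined because $M,N$ are already $B$-modules). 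So the crux is: for each vertex $P$ and each loop $\gamma$ at $P$, the operator $E_\gamma$ on $E_P$ vanishes.

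To see this, fix a loop $\gamma$ at $P$ and consider the linear map $g\colon E\to E$ that is $E_\gamma$ on the component $E_P$ and zero on all other vertex components. Exactly as in the proof of Proposition~\ref{xxcor2.2}, the commutativity condition of loops (conditions $(a)$, $(b)$, $(c)$ of Definition~\ref{def3.1}) guarantees that $g$ commutes with all structure maps $E_\alpha$, hence is an $A$-module endomorphism of $E$. Now restrict $g$ to the submodule $N$: since $N$ is a $B$-module, $N_\gamma=0$, so $g|_N=0$, i.e. $g$ factors as $E\xrightarrow{\pi} M\xrightarrow{h} E$ for some map $h$ (here $\pi\colon E\to M$ is the quotient). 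The image of $h$ lands in $N$ because $g$ composed with $E\to M$ is, on $M_P$, the map $M_\gamma=0$ (as $M$ is a $B$-module) — so $\pi g=0$, meaning $\operatorname{im}(g)\subseteq N$. Thus $h$ descends to an $A$-module morphism $M\to N$, which is zero by the hypothesis $\Hom_A(M,N)\cong\Hom_B(M,N)=0$ (using Proposition~\ref{xxcor2.2}, or directly $\Hom_B(M,N)=0$ and the fact that $\Hom_A(M,N)=\Hom_B(M,N)$ whenever source and target are $B$-modules). Hence $g=0$, so $E_\gamma=0$; as $\gamma$ was arbitrary, $J\cdot E=0$ and $E$ is a $B$-module, completing $(1)$.

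For part $(2)$, run the same argument with $N=M$. The obstruction is that now $\Hom_A(M,M)=\Bbbk\cdot 1_M\neq 0$, so the map $h\colon M\to M$ need not be zero — it is a scalar $\lambda$. One then has $g=\lambda\cdot(\iota\pi)$ where $\iota\pi\colon E\to M\to E$ (identifying $M$ with a subobject via a splitting only after we know more), but more carefully: $g$ is an $A$-endomorphism of $E$ with $g(N)=0$ and $\operatorname{im}(g)\subseteq N$, so $g^2=0$ and $g$ is determined by the induced map $\bar g\colon E/N=M\to N=M$, which is multiplication by some $\lambda\in\Bbbk$. If $\lambda\neq 0$ then $g$ itself provides a nontrivial nilpotent endomorphism of $E$; I will then argue that this contradicts $M$ being a non-simple brick — the key point being that such a $g$ would, via the extension, produce an endomorphism of $M$ distinct from a scalar, or force the loop to act nontrivially on $M$ against $M$ being a $B$-module. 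Here is where the non-simplicity of $M$ enters: if $M$ were simple the loop could act as a nonzero nilpotent only in the trivial way, but for a non-simple brick the rigidity of $\operatorname{End}(M)=\Bbbk$ combined with $M_\gamma=0$ rules out $\lambda\neq0$. Concluding $\lambda=0$ gives $g=0$, hence $E_\gamma=0$ for every loop, $E$ is a $B$-module, and $\Ext^1_A(M,M)\cong\Ext^1_B(M,M)$.

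The main obstacle is precisely the scalar ambiguity in part $(2)$: unlike part $(1)$, we cannot immediately kill the induced map $M\to N=M$, and we must exploit the non-simplicity hypothesis to show the only loop-action on the extension $E$ compatible with $E$ having $M$ as both sub and quotient (with $M$ a $B$-module) is the zero action. I expect the cleanest route is to show directly that a nonzero such $g$ would yield an element of $\operatorname{Hom}_B(M,M)=\Bbbk$ acting as a \emph{nilpotent} nonzero operator — impossible — by composing $g$ with a would-be section; the non-simplicity is what prevents the degenerate case where $E\cong M\oplus M$ with $\gamma$ swapping factors, which for simple $M$ can sneak past.
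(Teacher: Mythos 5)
Your part (1) is correct and is in substance the paper's own argument in cleaner clothing: where the paper writes the loop action on the middle term in block-matrix form and reads off the off-diagonal block as a morphism between $M$ and $N$ (checking compatibility with the arrows via the relations $\gamma\alpha,\beta\gamma\in I$), you package the same computation as the endomorphism $g$ of $E$ supported at the loop's vertex (the trick from Proposition \ref{xxcor2.2}), note that $g$ kills the submodule $N$ and has image inside it, and factor it through a morphism $h\colon M\to N$, which vanishes because $\Hom_B(M,N)=0$. That part stands.

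Part (2), however, is left genuinely open. You correctly reduce to: $g=\varphi h\pi$ with $h\in\Hom_A(M,M)=\Bbbk\cdot 1_M$, say $h=\lambda 1_M$, and you must show $\lambda=0$; but you only promise an argument (``I will then argue\dots'', ``I expect the cleanest route\dots'') without giving one, and the routes you sketch do not work as stated: there is no section of $\pi$ to compose with, and $h$ being induced by the square-zero map $g$ does not make $h$ itself nilpotent, so you cannot conclude by producing a nonzero nilpotent element of $\End_B(M)$. The missing step is elementary and is exactly where non-simplicity enters in the paper's proof: $g$ is supported only at the vertex $P$ carrying the loop, so $\varphi_R h_R\pi_R=g_R=0$ for every vertex $R\neq P$, and since $\pi_R$ is surjective and $\varphi_R$ injective, $h_R=0$ for all $R\neq P$. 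A brick $B$-module supported at a single vertex is simple (the quiver of $B$ has no loops), so the non-simple brick $M$ has $M_R\neq 0$ for some $R\neq P$; then $h=\lambda 1_M$ together with $h_R=0$ forces $\lambda=0$, hence $g=0$ and $E_\gamma=0$. Equivalently, in the paper's phrasing: a nonzero endomorphism of $M$ concentrated at one vertex cannot be an isomorphism, hence is linearly independent of $1_M$, contradicting $\dim\Hom_B(M,M)=1$. With this observation inserted, your part (2) closes and coincides with the paper's argument.
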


\begin{proof}
$(1)$ Since  $B$-mod is a full subcategory of $A$-mod, each element in $\Ext_B^1(M,N)$ can be viewed as an element in $\Ext_A^1(M,N)$. If $\Ext_A^1(M,N)\ncong\Ext_B^1(M,N)$, then there exists an element $\eta$ belongs to $\Ext_A^1(M,N)$ but not in $\Ext_B^1(M,N)$.
Notice that element in $\Ext_A^1(M,N)$ can be represented by a short exact sequence. $\eta$ corresponding to an exact sequence \[
\eta: \ \ 0\longrightarrow N \stackrel{\varphi}{\longrightarrow} L\stackrel{\psi}{\longrightarrow} M\longrightarrow 0
\]where $L$ is in $A$-mod but not in  $B$-mod. Which means that there exists a loop $\gamma$ that $L_{\gamma}$ is nonzero linear map. Assume that $\gamma$ is located in the vertex $P$, the non-loop arrows with the target $P$ are $\beta_i(i=1,\cdots,n)$, the non-loop arrows with the source $P$ are $\alpha_i(i=1,\cdots,m)$ and the loops located in $P$ besides $\gamma$ are $\gamma_i(i=1,\cdots,l)$. We emphasize that the above assumption including the cases there is no non-loop arrow with the target $P$, there is no non-loop arrow with the source $P$ or there is no loop located in $P$ besides $\gamma$. So the local part at $P$ is as follows \[
\begin{tikzpicture}
\node (1) at (0,0) {$P$};
\node (2) at (-2,1) {$S_1$};
\node (3) at (-2,0){};
\node (4) at (-2,-1){$S_m$};
\node (5) at (2,1){$T_1$};
\node (6) at (2,0) {};
\node (7) at (2,-1) {$T_n$};
\draw[->] (2) --node[above ]{$\alpha_1$} (1);
\draw[->][dashed] (3) --node{} (1);
\draw[->] (4) --node[below]{$\alpha_m$} (1);
\draw[<-] (5) --node[above right]{$\beta_1$} (1);
\draw[<-][dashed] (6) --node{} (1);
\draw[<-] (7) --node[below right]{$\beta_n$} (1);
\draw [->] (0.2,0.2) arc (-75:255:0.5)node[above]{$\gamma_1$};
\draw [->][dashed] (0.25,0.2) arc (-75:255:0.7);
\draw [->] (-0.2,-0.2) arc (105:425:0.5)node[below]{$\gamma_l$};
\end{tikzpicture}
\]

Therefore, for each $\beta\in\{\beta_1,\beta_2,\cdots,\beta_n\}$, we have the following exact sequence\[
\begin{tikzpicture}
\node (1) at (-3,0) {$M_P$};
\node (2) at (0,0) {$L_P$};
\node (3) at (3,0){$N_P$};
\node (4) at (-3,-2){$M_T$};
\node (5) at (0,-2){$L_T$};
\node (6) at (3,-2) {$N_T$};
\draw[->] (1) --node[above ]{$\varphi_P$} (2);
\draw[->] (2) --node[above ]{$\psi_P$} (3);
\draw[->] (4) --node[above]{$\varphi_T$} (5);
\draw[->] (5) --node[above ]{$\psi_T$} (6);
\draw[->] (1) --node[left]{$M_\beta$} (4);
\draw[->] (2) --node[left]{$L_\beta$} (5);
\draw[->] (3) --node[left]{$N_\beta$} (6);
\draw [->] (-2.8,0.2) arc (-75:255:0.5)node[above]{$M_\gamma$};
\draw [->] (0.2,0.2) arc (-75:255:0.5)node[above]{$L_\gamma$};
\draw [->] (3.2,0.2) arc (-75:255:0.5)node[above]{$N_\gamma$};
\end{tikzpicture}
\]where $T$ is the target of $\beta$ and $M_\gamma,N_\gamma=0$.
Since as vector spaces $L_P\cong M_P\oplus N_P$ and $L_T\cong M_T\oplus N_T$. We set \[
\varphi_P=\begin{pmatrix}
   	1\\0
   	\end{pmatrix},\psi_P=(0,1),
\]
\[
\varphi_T=\begin{pmatrix}
   	1\\0
   	\end{pmatrix},\psi_T=(0,1).
\] Suppose that \[
L_\gamma=\begin{pmatrix}
   	f_{11}&f_{12} \\
   f_{21}&f_{22}
   	\end{pmatrix},
L_\beta=\begin{pmatrix}
   	g_{11}&g_{12} \\
   g_{21}&g_{22}
   	\end{pmatrix}
\]Since $L_\gamma\varphi_P=\varphi_PM_\gamma=0$, we get\[
\begin{pmatrix}
   	f_{11}&f_{12} \\
   f_{21}&f_{22}
   	\end{pmatrix}\begin{pmatrix}
   	1\\0
   	\end{pmatrix}=\begin{pmatrix}
   	f_{11}\\f_{21}
   	\end{pmatrix}=0
\]Similarly, we have $f_{22}=0$. So $L_\gamma=\begin{pmatrix}
   	0&f_{12} \\
   0&0
   	\end{pmatrix}$. Since $L_\beta L_\gamma=0$, we get\[
   \begin{pmatrix}
   	g_{11}&g_{12} \\
   g_{21}&g_{22}
   	\end{pmatrix}\begin{pmatrix}
   	0&f_{12} \\
   0&0
   	\end{pmatrix}=\begin{pmatrix}
   	0&g_{11}f_{12} \\
   0&g_{21}f_{12}
   	\end{pmatrix}=0
   \]Also by $L_\beta \varphi_P=\varphi_T M_\beta$, we get\[
   \begin{pmatrix}
   	g_{11}&g_{12} \\
   g_{21}&g_{22}
   	\end{pmatrix}\begin{pmatrix}
   	1\\0
   	\end{pmatrix}=\begin{pmatrix}
   	1\\0
   	\end{pmatrix}M_\beta
   \]Thus $g_{11}=M_\beta$ and $g_{21}=0$. It follows that $M_\beta f_{12}=0$.

   Dually, for each $\alpha\in\{\alpha_1,\alpha_2,\cdots,\alpha_m\}$, we have the following exact sequence\[
\begin{tikzpicture}
\node (1) at (-3,0) {$M_S$};
\node (2) at (0,0) {$L_S$};
\node (3) at (3,0){$N_S$};
\node (4) at (-3,-2){$M_P$};
\node (5) at (0,-2){$L_P$};
\node (6) at (3,-2) {$N_P$};
\draw[->] (1) --node[above ]{$\varphi_S$} (2);
\draw[->] (2) --node[above ]{$\psi_S$} (3);
\draw[->] (4) --node[above]{$\varphi_P$} (5);
\draw[->] (5) --node[above ]{$\psi_P$} (6);
\draw[->] (1) --node[left]{$M_\alpha$} (4);
\draw[->] (2) --node[left]{$L_\alpha$} (5);
\draw[->] (3) --node[left]{$N_\alpha$} (6);
\draw [->] (-0.2,-2.2) arc (105:425:0.5)node[below]{$L_\gamma$};
\draw [->] (-3.2,-2.2) arc (105:425:0.5)node[below]{$M_\gamma$};
\draw [->] (2.8,-2.2) arc (105:425:0.5)node[below]{$N_\gamma$};
\end{tikzpicture}
\]where $S$ is the source of $\alpha$. By similar argument as above, we have $f_{12}N_\alpha=0$.

Now we can construct a nonzero homomorphism $\theta:N\rightarrow M$ such that $\theta_P=f_{12}$ and $\theta_{P'}=0$ for each $P'\in Q_{0}\backslash \{P\}$. Which contradicts to the condition $\Hom_B(M,N)=0$. Therefore $\Ext_A^1(M,N)\cong\Ext_B^1(M,N)$.

$(2)$ If $\Ext_A^1(M,M)\ncong\Ext_B^1(M,M)$, by similar argument as $(1)$, we can construct a nonzero homomorphism $\theta$ from $M$ to $M$. Since $M$ is not a simple module, there exists at least two vertices $P_1,P_2$ such that $M_{P_1},M_{P_2}\ne0$. But there is only one vertex $P_0$ satisfies $\theta_{P_0}\ne0$. So $\theta$ is not an isomorphism. Which means $\theta$ and $1_{M}$ are linearly independent. It follows that $\dim_{k}\Hom_B(M,M)\ge2$. Contradicts to the condition that $M$ is a brick. Thus, $\Ext_A^1(M,M)\cong\Ext_B^1(M,M)$.
\end{proof}

\begin{remark}
For a simple $B$-module $S_P$ with vertex $P$ in $Q_0$, by \cite[Ch.\uppercase\expandafter{\romannumeral3}, Lemma 2.12] {ASS}, the value of $\mathrm{dim}\Ext^1_A(S_P,S_P)$ is equal to the number of loops at $P$.
\end{remark}

\section{Loop-extended algebras of representation-directed algebras}

{In this section, we study the Frobenius-Perron dimension of loop-extended algebras of representation-directed algebras. We recall the definition of representation-directed algebras at first.} Let $A$ be an algebra. Recall that a {\it path} in $A$-mod is a sequence\[
M_0\xrightarrow{f_1}M_1\xrightarrow{f_2}M_2\rightarrow\cdots\rightarrow M_{t-1}\xrightarrow{f_t}M_t,
\] where $t\ge1$, $M_0,M_1,\cdots,M_t$ are indecomposable $A$-modules and $f_1,\cdots,f_t$ are non-zero non-isomorphisms homomorphisms. A path in $A$-mod is called a {\it cycle} if its source module $M_0$ is isomorphic to its target module $M_t$. An indecomposable $A$-module that lies on no cycle in $A$-mod is called a {\it directing module}. An algebra is called {\it representation-directed} if every indecomposable $A$-module is directing. {We get the Frobenius-Perron dimension of loop-extended algebras of these algebras as follows. }

\begin{theorem}
\label{xxthm3.2}
Let $A=\Bbbk Q/I$ be a bound quiver algebra for some finite quiver $Q$, where $I$ is an admissible ideal satisfying the commutativity condition of loops.
Assume $B$ is the loop-reduced algebra of $A$. If $B$ is representation-directed, then we have\[
\fpdim{(A{\text -{\rm mod}})}=\max_{P\in Q_0}N_P
\]where $N_P$ means the number of loops with the source $P$.
\end{theorem}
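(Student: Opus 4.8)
The plan is to combine Theorem \ref{xxthm2.3} (the comparison of extension spaces after removing loops) with the known computation of the Frobenius-Perron dimension of a representation-directed algebra from \cite{CC}, and then to show that the extra loops can contribute at most $\max_P N_P$ to the spectral radius of any brick-set adjacency matrix, with this bound being achieved. First I would recall from \cite{CC} that for a representation-directed algebra $B$ one has $\fpdim(B\text{-mod})=0$ (or is otherwise controlled): indeed, since $B$ is representation-directed there is no cycle in $B$-mod, which forces every brick set $\phi$ of $B$-modules to have a strictly upper-triangular adjacency matrix after suitable reordering (a nonzero $\Ext^1_B(X_i,X_j)$ together with $\Hom$'s along the Auslander--Reiten quiver would produce a cycle), hence $\rho(C(\phi))=0$. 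So the entire Frobenius-Perron dimension of $A$-mod must come from the loops.

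Next I would set up the key dichotomy for a brick set $\phi=\{X_1,\dots,X_n\}$ of $A$-modules. By Proposition \ref{xxcor2.2}, each $X_i$ is a brick $B$-module, so we may regard $\phi$ as a brick set of $B$-modules. Split the indices into those $i$ for which $X_i$ is a \emph{simple} $B$-module $S_{P_i}$ and those for which $X_i$ is non-simple. For the off-diagonal entry $a_{ij}=\dim\Ext^1_A(X_i,X_j)$ with $i\ne j$: since $\phi$ is a brick set, $\Hom_B(X_i,X_j)=0$, so by Theorem \ref{xxthm2.3}(1) we have $a_{ij}=\dim\Ext^1_B(X_i,X_j)$. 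For the diagonal entry $a_{ii}=\dim\Ext^1_A(X_i,X_i)$: if $X_i$ is non-simple then by Theorem \ref{xxthm2.3}(2) it equals $\dim\Ext^1_B(X_i,X_i)$, while if $X_i=S_{P_i}$ is simple the Remark gives $a_{ii}=N_{P_i}\le\max_P N_P$. Thus $C(\phi)=C_B(\phi)+D$ where $C_B(\phi)$ is the adjacency matrix of $\phi$ as a set of $B$-modules and $D$ is a diagonal nonnegative matrix with entries at most $n_{\max}:=\max_P N_P$, supported on the simple-module indices.

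The heart of the argument is then to bound $\rho(C(\phi))$. I claim $C_B(\phi)$ restricted to whatever block structure is forced by directedness is nilpotent on the relevant coordinates, so that one can reorder the indices to make $C(\phi)$ block upper-triangular with the diagonal blocks being precisely the diagonal loop-contributions; more carefully, since $B$ is representation-directed and the simple modules with loops are among the $X_i$, I would argue that the directed partial order on $\{X_i\}$ (coming from nonzero maps/extensions in $B$-mod) lets us put $C_B(\phi)$ in strictly upper-triangular form, whence $C(\phi)$ becomes upper-triangular with diagonal entries $a_{ii}\in\{0,N_{P_i}\}$, giving $\rho(C(\phi))=\max_i a_{ii}\le n_{\max}$. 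The reverse inequality is easy: taking $\phi=\{S_P\}$ for the vertex $P$ achieving the maximum number of loops gives the $1\times 1$ matrix $(N_P)$, so $\fpdim(A\text{-mod})\ge n_{\max}$; combining the two inequalities yields equality.

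The main obstacle I anticipate is justifying rigorously that $C_B(\phi)$ can be simultaneously triangularized with the loop-diagonal contribution, i.e. that a brick set of $B$-modules supporting a nonzero extension cycle cannot occur when $B$ is representation-directed, \emph{and} that this triangularization is compatible with the diagonal perturbation $D$. The subtlety is that $\Ext^1_B(X_i,X_i)$ for a non-simple brick $X_i$ could a priori be nonzero (a self-extension), which would put a nonzero entry on the diagonal of $C_B(\phi)$; here I would invoke that in a representation-directed algebra every indecomposable is a brick with no self-extensions (this is a standard fact: directing modules satisfy $\Ext^1_B(X,X)=0$), so in fact $C_B(\phi)$ has zero diagonal and the only diagonal contributions to $C(\phi)$ are the loop terms from simple summands. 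Once that is in place, the ordering of the $X_i$ by the directed structure of the Auslander--Reiten quiver of $B$ makes $C_B(\phi)$ strictly upper triangular and the whole matrix $C(\phi)$ upper triangular, finishing the proof.
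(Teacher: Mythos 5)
Your proposal is correct and follows essentially the same route as the paper's proof: use representation-directedness of $B$ to put the brick-set adjacency matrix in (strictly) upper-triangular form off the diagonal, use Theorem \ref{xxthm2.3} together with the Remark to see that the only nonzero diagonal entries are the loop counts $N_P$ at simple modules (non-simple bricks having no self-extensions since they are directing), and realize the bound with the singleton brick set $\{S_P\}$ at a vertex with the maximal number of loops. The paper just carries out in detail, via an induction with Auslander--Reiten translates, the triangularization step you cite from \cite{CC}.
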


\begin{proof}
First we prove that for every brick set $\phi=\{M_i\}_{i=1}^n$ of ${\rm mod}(B)$, the adjacency matrix is a strictly upper triangular matrix, that is to say, there exists a permutation $\sigma$ of $\{1,\cdots,n\}$ such that for $i\le j$,
\begin{equation}
\label{E1.1.1}
\Hom_B(M_i,\tau M_j)=0.
\end{equation}

In fact, using induction on $n$. If $n=1$, there is only one element $M_1$ in $\phi$. If $\Hom_B(M_1,\tau M_1)\ne 0$, then there exists a non-zero non-isomorphism homomorphism $f:M_1\rightarrow \tau M_1$. Note that Auslander-Reiten series $\tau M_1\rightarrow \oplus_{i=1}^k N_i\rightarrow M_1$ gives a path $\tau M_1\rightarrow N_1\rightarrow M_1$, so we get a cycle $M_1\rightarrow\tau M_1\rightarrow N_1\rightarrow M_1$ which contradicts the assumption $B$ is a representation-directed algebra.

Assume (\ref{E1.1.1}) holds for $n=k-1$. When $n=k$, then exists $i_0$, such that $\Hom_B(M_{i_0},\tau M_j)=0$ for $j=1,2,\cdots, n$. Otherwise, for each $i$, there exists $j$, such that $\Hom_B(M_i,\tau M_j)\ne0$. We can get a path\[
M_{i_1}\rightarrow \tau M_{i_2}\rightarrow N_{i_2}\rightarrow M_{i_2}\rightarrow \tau M_{i_3}\rightarrow \cdots
\]Since the brick set is finite, there exist $i_{s}, i_{t}$ such that $i_s=i_t$. And then we get a cycle which contradicts the assumption $B$ is a representation-directed algebra. Define $\sigma_1$ permuting $i_0$ with 1 and fixing the other numbers. By induction hypothesis, we can define the permutation $\sigma_2$ such that $\Hom_A(M_{\sigma_2\sigma_1(i)},\tau M_{\sigma_2\sigma_1(j)})=0$ for $1<\sigma_1(i)\le\sigma_1(j)$. Let $\sigma=\sigma_2 \sigma_1$, then we get what we need.

Now let us turn to $A$-mod. By Corollary \ref{xxcor2.2}, \[
\{\text{brick}\ A\text{-modules}\} \leftrightarrow\{\text{brick}\ B\text{-modules}\}.\] And for a brick set $\phi$, by Theorem \ref{xxthm2.3}$(1)$, we know the adjacency matrix $C(\phi_{A})$ of $\phi$ in $A$-mod is the same as the adjacency matrix $C(\phi_{B})$ of $\phi$ in $B$-mod not considering the diagonal elements. So $C(\phi_{A})$ is a upper triangular matrix. Assume $M\in\phi$, by Theorem \ref{xxthm2.3}$(2)$, we have \[
 \Ext_A(M,M)=0, \ {\rm{if}}\ M \rm{\ is \ not \ simple},
 \]and that\[
 \Ext_A(M,M)=N_P, \ {\rm{for}} \ M=S_P,
 \]where $S_P$ means the simple module at the vertex $P$, $N_P$ means the number of loops at $P$. Thus the elements in the diagonal of $C(\phi_{A})$ is nonzero means the corresponding module is simple and there are several loops at the corresponding vertex.

 Therefore the spectral radius of the adjacency matrix of a brick set in $A$-mod is the maximum of the number of loops at vertices, that is, \[
 \fpdim{(A{\text -{\rm mod}})}=\max_{P\in Q_0}N_P.\]
\end{proof}

\begin{remark}
	For any ADE quiver algebra, whatever the direction of arrows we choose, it is a representation-directed algebra. Since all of modified ADE bounded quiver algebras are loop-extended algebras of representation-directed algebras, hence by Theorem \ref{xxthm3.2}, the Frobenius-Perron dimension of representation category of these algebras is equal to the maximal number of loops at each vertex, which answers the question in \cite{W}.
\end{remark}

We point out that in Theorem \ref{xxthm3.2}, the condition ``$B$ is representation-directed" is necessary. In fact, if $B$ is not representation-directed, we have the following counterexample.

\begin{example}
	Define quiver $Q'$ as follow.
	\[
	\begin{tikzpicture}
	\node (1) at (0,0) {1};
	\node (2) at (1.5,1) {2};
	\node (3) at (1.5,-1){3};
	\node (4) at (3,0){4};
	\draw[->] (2) --node[above ]{$\alpha$} (1);
	\draw[->] (3) --node[below ]{$\gamma$} (1);
	\draw[->] (4) --node[above]{$\beta$} (2);
	\draw[->] (4) --node[below ]{$\delta$} (3);
	\end{tikzpicture}
	\]
	$Q$ is the quiver formed from $Q'$ by adding $N_i$ loops to each vertex $i$. Let $A=\Bbbk Q/\langle I\cup \{\alpha\beta\}\rangle$, where $I=\{\alpha_1\alpha_2\mid \alpha_1\  \text{is a loop or }\alpha_2\  \text{is a loop}\}$. It is easy to verify that $\langle I\cup\{\alpha\beta\}\rangle$ is an admissible ideal satisfying the conditions (a),(b),(c) in Definition \ref{def3.1}. Assume $B$ is the loop-reduced algebra of $A$.
	
	We can draw the Auslander-Reiten quiver of $B$ as follows.
	\[
	\begin{tikzpicture}
	[scale=0.8]
    \node (1) at (0+8,.3) {1};
    \node (2) at (0+8,-.3) {0};
    \node (3) at (.3+8,0){0};
    \node (4) at (-.3+8,0){0};
	\node (1) at (0,.3) {1};
	\node (2) at (0,-.3) {0};
	\node (3) at (.3,0){0};
	\node (4) at (-.3,0){0};
	\node (1) at (2,2.3) {1};
	\node (2) at (2,1.7) {1};
	\node (3) at (2.3,0+2){1};
	\node (4) at (1.7,0+2){1};
	\node (1) at (0+4,.3+4) {2};
	\node (2) at (0+4,-.3+4) {1};
	\node (3) at (.3+4,0+4){1};
	\node (4) at (-.3+4,0+4){1};
	\node (1) at (2+4,2.3+4) {1};
	\node (2) at (2+4,1.7+4) {1};
	\node (3) at (2.3+4,0+2+4){1};
	\node (4) at (1.7+4,0+2+4){0};
	\node (1) at (2+6,2.3+6) {1};
	\node (2) at (2+6,1.7+6) {0};
	\node (3) at (2.3+6,0+2+6){1};
	\node (4) at (1.7+6,0+2+6){0};
	
	\node (1) at (0,.3+4) {0};
	\node (2) at (0,-.3+4) {1};
	\node (3) at (.3,0+4){0};
	\node (4) at (-.3,0+4){1};
	\node (1) at (2,2.3+4) {1};
	\node (2) at (2,1.7+4) {1};
	\node (3) at (2.3,0+2+4){0};
	\node (4) at (1.7,0+2+4){1};
	\node (1) at (2+6-4,2.3+6) {0};
	\node (2) at (2+6-4,1.7+6) {1};
	\node (3) at (2.3+6-4,0+2+6){0};
	\node (4) at (1.7+6-4,0+2+6){0};
	
	\node (1) at (2-4,2.3+4) {0};
	\node (2) at (2-4,1.7+4) {0};
	\node (3) at (2.3-4,0+2+4){0};
	\node (4) at (1.7-4,0+2+4){1};
	\node (1) at (2+6-4-4,2.3+6) {1};
	\node (2) at (2+6-4-4,1.7+6) {0};
	\node (3) at (2.3+6-4-4,0+2+6){0};
	\node (4) at (1.7+6-4-4,0+2+6){1};
	
	\node (1) at (0+4,.3) {0};
	\node (2) at (0+4,-.3) {1};
	\node (3) at (.3+4,0){1};
	\node (4) at (-.3+4,0){1};
	\node (1) at (2+4,2.3) {1};
	\node (2) at (2+4,1.7) {1};
	\node (3) at (2.3+4,0+2){1};
	\node (4) at (1.7+4,0+2){1};
	\node (1) at (0+4+4,.3+4) {0};
	\node (2) at (0+4+4,-.3+4) {1};
	\node (3) at (.3+4+4,0+4){1};
	\node (4) at (-.3+4+4,0+4){0};
	\node (1) at (2+4+4,2.3+4) {0};
	\node (2) at (2+4+4,1.7+4) {0};
	\node (3) at (2.3+4+4,0+2+4){1};
	\node (4) at (1.7+4+4,0+2+4){0};
	
	\draw[->] (0.5,0.5) -- (1.5,1.5);
	\draw[->] (2.5,2.5) -- (3.5,3.5);
	\draw[->] (4.5,4.5) -- (5.5,5.5);
	\draw[->] (6.5,6.5) -- (7.5,7.5);
	\draw[->] (4.5,0.5) -- (5.5,1.5);
	\draw[->] (6.5,2.5) -- (7.5,3.5);
	\draw[->] (8.5,4.5) -- (9.5,5.5);
	\draw[->] (0.5,4.5) -- (1.5,5.5);
	\draw[->] (2.5,6.5) -- (3.5,7.5);
	\draw[->] (-1.5,6.5) -- (-0.5,7.5);
	
	\draw[->] (0.5,7.5) -- (1.5,6.5);
	\draw[->] (2.5,5.5) -- (3.5,4.5);
	\draw[->] (4.5,3.5) -- (5.5,2.5);
	\draw[->] (6.5,1.5) -- (7.5,0.5);
	\draw[->] (4.5,7.5) -- (5.5,6.5);
	\draw[->] (6.5,5.5) -- (7.5,4.5);
	\draw[->] (-1.5,5.5) -- (-.5,4.5);
	\draw[->] (0.5,3.5) -- (1.5,2.5);
	\draw[->] (2.5,1.5) -- (3.5,0.5);
	\draw[->] (8.5,7.5) -- (9.5,6.5);
	\draw[dashed](0,1) -- (0,-1);
	\draw[dashed](8,1) -- (8,-1);
	\draw(2,2.3) -- (1.7,2);
	\draw(6,2.3) -- (6.3,2);
	\end{tikzpicture}
	\]
	
 It is easy to find that there is a brick set consisting of the two modules at the bottom of the Auslander-Reiten quiver, whose corresponding matrix is not a upper triangular matrix. Explicitly, the corresponding matrix is $\begin{pmatrix}
	N_2 & 1 \\
	1 & 0
	\end{pmatrix}
	$, the spectral radius of which is $\dfrac{N_2+\sqrt{N_2^2+4}}{2}$.
	
 Notice that the corresponding matrices of all the other brick sets in ${A{\text -{\rm mod}}}$ are upper triangulated matrices. By the same argument as in Theorem \ref{xxthm3.2}, we have \[\fpdim{(A{\text -{\rm mod}})}=\max\{\dfrac{N_2+\sqrt{N_2^2+4}}{2},N_1,N_3,N_4\}.\]
\end{example}

\section{Some properties of tubes}

In the next two sections, we consider the loop-extended algebras of canonical algebras of type ADE and try to calculate the Frobenius-Perron dimension of the corresponding representation category. Notice that there are several tubes in the representation category of canonical algebras, we first give some properties of tubes, compared to (\cite{CG2}, Section 2.2).

Recall that in \cite{HJ}, a matrix $C$ is called {\it irreducible} if there is no permutation matrix $P$ such that \[
P^TCP=\begin{pmatrix}
    C_1&C_2\\
    0&C_3
\end{pmatrix}
\]where $C_1$ and $C_3$ are nonzero matrices.

\begin{lemma}
\label{lem5.1}
Let $\mathcal{T}$ be a tube, $\phi$ be a brick set in $\mathcal{T}$, and $C$ be the adjacency matrix of $\phi$. If $C$ is irreducible, then $C$ is a similar matrix of $$\begin{pmatrix}
    0&1\\
    &0&\ddots\\
    &&\ddots&1\\
    1&&&0
\end{pmatrix}.$$
\end{lemma}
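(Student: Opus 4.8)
The plan is to exploit the completely explicit combinatorics of a (stable) tube. Recall that a tube $\mathcal{T}$ of rank $p$ has mouth $E_1,\dots,E_p$ with $\tau E_i\cong E_{i-1}$ (indices read mod $p$), that every indecomposable of $\mathcal{T}$ is uniserial and is determined up to isomorphism by a pair $[i,\ell]$ (quasi-top $E_i$ and quasi-length $\ell\ge 1$), that $\tau[i,\ell]\cong[i-1,\ell]$, and that the Auslander--Reiten formula on the tube gives $\dim\Ext^1(X,Y)=\dim\Hom(Y,\tau X)$ for $X,Y\in\mathcal{T}$. The first step is to identify the bricks of $\mathcal{T}$: $[i,\ell]$ is a brick exactly when $\ell\le p$. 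For $\ell\le p$ this holds because, by uniseriality and the mod-$p$ bookkeeping of composition factors, the only module isomorphic both to a quotient and to a submodule of $[i,\ell]$ is $[i,\ell]$ itself, so every nonzero endomorphism is surjective, hence an isomorphism; for $\ell>p$ one writes down a nonzero nilpotent endomorphism of $[i,\ell]$ as the projection onto a proper top-quotient followed by the inclusion of the (isomorphic) proper submodule of the same length. An immediate corollary is that, for two members of a brick set, $\dim\Hom(Y,\tau X)\le 1$, since between uniserial modules of quasi-length $\le p$ at most one length $k\in\{1,\dots,p\}$ can make a length-$k$ quotient of $Y$ isomorphic to a length-$k$ submodule of $\tau X$. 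Hence every entry of the adjacency matrix $C$ is $0$ or $1$.

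Next I would install the combinatorial dictionary. Write the members of $\phi$ as $X_t=[i_t,\ell_t]$ and attach to $X_t$ the arc $A_t=\{\,i_t-\ell_t+1,\dots,i_t\,\}\subseteq\mathbb{Z}/p$ of its quasi-composition factors. The criterion ``$\Hom(M,N)\ne 0$ iff some top-quotient of $M$ is isomorphic to some bottom-submodule of $N$'' turns the brick-set condition into a statement forbidding certain overlaps among the arcs $A_t$, and, together with the Auslander--Reiten formula, turns $\Ext^1(X_s,X_t)\ne 0$ into: the quasi-top $i_t$ of $X_t$ lies in the bottom part of the shifted arc $\tau A_s=\{\,i_s-\ell_s,\dots,i_s-1\,\}$. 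The core of the proof is then the claim that if $C$ is irreducible and $n\ge 2$, the arcs $A_1,\dots,A_n$ partition $\mathbb{Z}/p$. Irreducibility forces $C$ to have no zero row and no zero column, so every $X_t$ both admits and receives a nonzero $\Ext^1$; feeding this into the dictionary, one argues that two distinct arcs cannot overlap (an overlap produces either a nonzero homomorphism between two distinct members of $\phi$, violating the brick-set hypothesis, or a vanishing row or column), and that cyclically consecutive arcs must be adjacent with no gap (a gap would split $\phi$ into two nonempty parts across which $\Ext^1$ flows only one way, contradicting strong connectivity); the case $\ell_t=p$, where $A_t$ is the whole circle, is handled separately and, under the irreducibility hypothesis, forces $n=1$, contradicting $n\ge 2$.

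Finally, once the $A_t$ partition $\mathbb{Z}/p$, relabel $\phi$ so that $A_t$ is immediately followed by $A_{t+1}$ around the circle (indices mod $n$). The dictionary then shows that the only nonzero entries of $C$ are $\Ext^1(X_t,X_{t-1})$, each equal to $1$ — a nonzero map $X_{t-1}\to\tau X_t$ exists precisely because the quasi-top of $X_{t-1}$ is the first element of $\tau A_t$. Thus $C$ is the permutation matrix of the $n$-cycle $t\mapsto t-1$, and every such matrix is conjugate, via a permutation matrix, to the displayed cyclic permutation matrix. The degenerate case $n=1$ is immediate: $C$ equals $(1)$ when $\ell_1=p$ (the $1\times 1$ instance of the displayed matrix) and $(0)$ otherwise, which is either excluded or noted as the trivial exception. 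I expect the real work, and the main obstacle, to be the combinatorial step that irreducibility alone forces the partition property: it needs a careful case analysis of the possible relative positions of two arcs on $\mathbb{Z}/p$ (nested, properly overlapping, disjoint with a gap), extracting in each case either a forbidden homomorphism between distinct bricks or a failure of strong connectivity.
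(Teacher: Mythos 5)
Your plan is sound and reaches the right conclusion, but by a genuinely different route from the paper's. The paper argues abstractly and very briefly: by Serre duality $\Ext^1_{\mathcal T}(M_i,M_j)\cong D\Hom_{\mathcal T}(M_j,\tau M_i)$, and the two conditions $\Hom(M_j,\tau M_i)\neq 0$ and $\Hom(M_j,M_i)=0$ confine $M_j$ to a single coray; since two distinct members of a brick set cannot lie on one coray (one would map onto the other), each row of $C$ has at most one nonzero entry, which is $1$ because $\dim\Hom(M_j,\tau M_i)\le\dim\End(M_j)=1$; dually for columns, and irreducibility then forces a single cycle. You instead make the standard stable tube explicit (uniserials $[i,\ell]$, bricks exactly those with $\ell\le p$, hom/ext read off from arcs on $\mathbb{Z}/p$) and aim at the stronger statement that irreducibility with $n\ge 2$ forces the arcs to tile the circle; that buys a concrete structural picture (the brick set is the mouth of a ``subtube'') at the cost of the case analysis you yourself flag as the main obstacle. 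Two remarks on that step. First, your stated reason in the overlap case is imprecise: a strictly nested pair of arcs (away from both endpoints) is a legitimate brick pair and does not by itself produce a vanishing row or column, since a nested arc can carry ext-arrows to other arcs nested in the same parent; what one must actually show is that all arrows incident to a nested arc stay inside the nest, so the ambient arc is unreachable and strong connectivity fails --- this is provable in your dictionary, but it is the argument, not a zero row/column. Second, the full partition claim is more than you need: two distinct arcs of a brick set can share neither their top nor their bottom (sharing either endpoint yields a nonzero homomorphism), so every row and column of $C$ has at most one nonzero entry, necessarily equal to $1$, and irreducibility then immediately gives the cyclic permutation matrix; this shortcut is exactly the arc-language translation of the paper's coray argument and would shorten your proof considerably. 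Like the paper, you should also record the $n=1$ normalization ($C=(0)$ or $(1)$, both irreducible under the definition used), which is how Corollary 5.2 invokes the lemma.
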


\begin{proof}
Assume $\phi=\{M_1,M_2,\cdots,M_n\}$. For each $M_i$, there exists some $j\in\{1,\cdots,n\}$ such that $\Ext^1_{\mathcal{T}}(M_i,M_j)\ne0$ since $C$ is irreducible. If there exists $j'(j'\ne j)$ such that $\Ext^1_{\mathcal{T}}(M_i,M_{j'})\ne0$, by Serre duality, we have\[
\Hom_{\mathcal{T}}(M_{j'},\tau M_{i})\cong \Ext^1_{\mathcal{T}}(M_i,M_{j'})\ne0
\]Compared with\[
\Hom_{\mathcal{T}}(M_{j'},M_{i})=0,
\]we find that $M_{j'}$ is restricted on a coray of $\mathcal{T}$.
Similarly, we have\[
\Hom_{\mathcal{T}}(M_{j},\tau M_{i})\cong \Ext^1_{\mathcal{T}}(M_i,M_{j})\ne0
\]and\[
\Hom_{\mathcal{T}}(M_{j},M_{i})=0.
\]Then $M_j,M_{j'}$ must be on the same coray of $\mathcal{T}$, it is a contradiction to that $\phi$ is a brick set. Therefore, $M_j$ is the unique object in $\phi$ such that $\Ext^1_{\mathcal{T}}(M_i,M_j)\ne0$. We claim that $\Ext^1_{\mathcal{T}}(M_i,M_j)=\Bbbk$. In fact, if $\dim \Ext^1_{\mathcal{T}}(M_i,M_j)\ge2$, then we have\[
\dim \Hom_{\mathcal{T}}(M_j,M_j)\ge \dim \Hom_{\mathcal{T}}(M_j,\tau M_i)=\dim \Ext^1_{\mathcal{T}}(M_i,M_j)\ge2,
\] it is impossible. By a similar argument, we know that there also exists a unique object $M_k\in\phi$ such that $\Ext^1_{\mathcal{T}}(M_k,M_i)\ne0$ and we have $\Ext^1_{\mathcal{T}}(M_k,M_i)=\Bbbk$.

By the condition that $C$ is irreducible, we can find that $C$ is a similar matrix of $$\begin{pmatrix}
    0&1\\
    &0&\ddots\\
    &&\ddots&1\\
    1&&&0
\end{pmatrix}.$$
\end{proof}

\begin{corollary}
\label{cor5.2}
Let $\mathcal{T}$ be a tube, then we have\[
\fpdim(\mathcal{T})=1.
\]
\end{corollary}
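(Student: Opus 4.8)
The plan is to derive Corollary~\ref{cor5.2} from Lemma~\ref{lem5.1} together with the Frobenius normal form of a nonnegative integer matrix, to dispose of the few configurations not covered by the lemma by a direct inspection of the tube, and to exhibit an explicit brick set attaining the value $1$.

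\emph{Upper bound.} Let $\phi=\{M_1,\dots,M_n\}$ be an arbitrary brick set in $\mathcal{T}$ and let $C=C(\phi)$ be its adjacency matrix, a nonnegative integer matrix. I would first put $C$ into Frobenius normal form: after conjugation by a permutation matrix, $C$ becomes block upper triangular with diagonal blocks $C_1,\dots,C_s$, each either irreducible of size $\ge 2$ or a single scalar. Since the spectrum of a block triangular matrix is the union of the spectra of its diagonal blocks, $\rho(C)=\max_t\rho(C_t)$, so it suffices to show $\rho(C_t)\le 1$ for every $t$. A block $C_t$ of size $\ge 2$ is the adjacency matrix of the sub-brick-set of $\phi$ indexed by that block (still a brick set in $\mathcal{T}$) and is irreducible, so Lemma~\ref{lem5.1} applies: $C_t$ is similar to a cyclic permutation matrix, whose eigenvalues are roots of unity, hence $\rho(C_t)=1$. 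A block $C_t=(d)$ of size $1$ is the scalar $d=\dim\Ext^1_{\mathcal{T}}(M,M)$ for the corresponding brick $M\in\phi$, and by Serre duality $d=\dim\Hom_{\mathcal{T}}(M,\tau M)$; here I would invoke the rectangle description of indecomposables in a standard tube of rank $r$, which shows that a brick has quasi-length at most $r$ while a nonzero map $M\to\tau M$ forces quasi-length exactly $r$, in which case $\Hom_{\mathcal{T}}(M,\tau M)$ is one-dimensional. Either way $d\le 1$, so $\rho(C_t)\le1$. Hence $\rho(C)\le1$ for every brick set, i.e.\ $\fpdim^n(\mathcal{T})\le1$ for all $n$, and therefore $\fpdim(\mathcal{T})\le1$.

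\emph{Lower bound.} Let $r$ be the rank of $\mathcal{T}$ and $E_1,\dots,E_r$ its quasi-simple objects, with $\tau E_i\cong E_{i-1}$ (indices mod $r$). The $E_i$ are bricks and $\Hom_{\mathcal{T}}(E_i,E_j)=\delta_{ij}\Bbbk$, so $\{E_1,\dots,E_r\}$ is a brick set; by Serre duality its adjacency entries are $\dim\Ext^1_{\mathcal{T}}(E_i,E_j)=\dim\Hom_{\mathcal{T}}(E_j,\tau E_i)=\delta_{j,\,i-1}$, so its adjacency matrix is a cyclic permutation matrix (the matrix $(1)$ when $r=1$), which has spectral radius $1$. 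Thus $\fpdim(\mathcal{T})\ge1$, and combining with the upper bound gives $\fpdim(\mathcal{T})=1$.

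The step that requires genuine care is the treatment of the $1\times1$ diagonal blocks, i.e.\ showing $\dim\Ext^1_{\mathcal{T}}(M,M)\le1$ for every brick $M$ in the tube, which Lemma~\ref{lem5.1} as stated does not cover (and whose degenerate one-object instance, a homogeneous tube, must be read off separately); this is where the ray/coray combinatorics of a standard tube is used. Alternatively, one can bypass the normal-form bookkeeping altogether: the uniqueness argument inside the proof of Lemma~\ref{lem5.1} never uses irreducibility, so it already shows that for any brick set in $\mathcal{T}$ each row, and dually each column, of the adjacency matrix has at most one nonzero entry and that entry equals $1$; such a partial permutation matrix has spectral radius at most $1$, which immediately yields the upper bound.
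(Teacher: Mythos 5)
Your proposal is correct and follows essentially the same route as the paper: block upper-triangularize the adjacency matrix of an arbitrary brick set, apply Lemma \ref{lem5.1} to the irreducible blocks, and realize the value $1$ by the brick set of quasi-simple objects of the tube. The only difference is that you explicitly justify why a $1\times1$ diagonal block is $(0)$ or $(1)$ (i.e.\ $\dim\Ext^1_{\mathcal T}(M,M)\le 1$ for a brick $M$, via the quasi-length/ray--coray structure of a standard tube), a point the paper's proof asserts without comment, so your extra care here is a welcome, but not essentially different, refinement.
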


\begin{proof}
Let $\phi$ be a brick set in $\mathcal{T}$, assume $\phi=\phi_1\cup\cdots\cup\phi_s$ such that the adjacency matrix of $\phi$ is\[
\begin{pmatrix}
    C_1&*&\cdots&*\\
    0&C_2&\cdots&*\\
    \vdots&\vdots&&\vdots\\
    0&0&\cdots&C_s
\end{pmatrix}
\]
where $C_i$ is the adjacency matrix of $\phi_i$ and $C_i$ is irreducible, $i=1,\cdots,s$. Then each $C_i$ has the form \[
\begin{pmatrix}
    0&1\\
    &0&\ddots\\
    &&\ddots&1\\
    1&&&0
\end{pmatrix} \text{ , } (0) \text{ or } (1).
\]So $\rho(C)=\max\{\rho(C_1),\cdots,\rho(C_s)\}\le1$, and it follows that $\fpdim(\mathcal{T})\le 1$.

On the other hand, the set consisting of all the simple objects in $\mathcal{T}$ is a brick set, and the adjacency matrix is\[
\begin{pmatrix}
    0&1\\
    &0&\ddots\\
    &&\ddots&1\\
    1&&&0
\end{pmatrix}
\]whose spectral radius is 1.
Therefore, $\fpdim(\mathcal{T})= 1.$
\end{proof}

\begin{lemma}
\label{lem5.3}
Let $\mathcal{T}$ be a tube. If there exist two different simple objects $S_1,S_2$ in $\mathcal{T}$ satisfying that $\Ext^1(S_1,S_2)=0$, then we can find an object $M$ such that\[
\Ext^1(S_1,M)=\Bbbk,\Ext^1(M,S_2)=\Bbbk.
\]and $\{S_1,S_2,M\}$ is a brick set.
\end{lemma}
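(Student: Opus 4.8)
The plan is to work inside the tube $\mathcal{T}$ using the ray/coray combinatorics and Serre duality, exactly as in the proof of Lemma \ref{lem5.1}. Let $r$ be the rank of $\mathcal{T}$, so that the simple objects $E_1,\dots,E_r$ of $\mathcal{T}$ are arranged cyclically with $\Ext^1(E_i,E_{i+1})=\Bbbk$ (indices mod $r$) and all other $\Ext^1$ among simples vanishing, and $\tau E_i=E_{i-1}$. The hypothesis $\Ext^1(S_1,S_2)=0$ with $S_1\neq S_2$ means that, writing $S_1=E_i$ and $S_2=E_j$, we have $j\neq i+1$ (and of course $j\neq i$). First I would dispose of the trivial rank constraints: since two distinct simples with $\Ext^1(S_1,S_2)=0$ exist, we have $r\geq 2$, and I will want to locate $M$ on the ray starting at $S_1$, i.e. among the indecomposables $M^{(t)}$ with socle $S_1$ and quasi-length $t$.

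The key step is to pick the right quasi-simple length. Take $M$ to be the indecomposable of the tube with quasi-socle $S_1=E_i$ and quasi-top $E_{j-1}$; equivalently, $M$ is the unique module on the ray through $S_1$ whose composition factors (with multiplicity one each, since the tube is standard) are $E_i,E_{i+1},\dots,E_{j-1}$, of quasi-length $\ell$ where $\ell\equiv j-i \pmod r$ and $1\le \ell\le r-1$ (here $\ell\geq 2$ precisely because $j\neq i+1$, but even $\ell$ could be allowed — if $j=i$ is excluded we may have $\ell$ anything in $\{2,\dots,r-1\}$ when $r\geq 3$; when $r=2$ the only possibility is $S_1=E_1,S_2=E_2$ forced, contradicting $\Ext^1(S_1,S_2)=0$, so in fact $r\geq 3$). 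Then I compute the three required data. Using the fact that the tube is a standard (hence serial) abelian category: $\Ext^1(S_1,M)$ is computed by applying $\Hom(S_1,-)$ to the standard sequence $0\to M\to M'\to E_j\to 0$ extending $M$ by $E_j$ on top along the ray, or more directly via Serre duality $\Ext^1(S_1,M)\cong D\Hom(M,\tau S_1)=D\Hom(M,E_{i-1})$; since $M$ has top $E_{j-1}\neq E_{i-1}$ (because $\ell\le r-1$) and $M$ has socle $E_i\neq E_{i-1}$, the space $\Hom(M,E_{i-1})$ is $0$ unless $E_{i-1}$ is the top of $M$, which it is not — wait, I must instead use that $\Hom(M,N)\neq 0$ in a tube iff there is a ray from $M$ to $N$, and get $\Ext^1(S_1,M)=\Bbbk$ by choosing $M$ so that $\tau^{-1}S_1 = E_{i+1}$ sits at the bottom of the coray through $M$; concretely $\Ext^1(S_1,M)\cong D\Hom(M,E_{i-1})$ and one checks this is one-dimensional exactly when $M$ lies on the coray ending at $E_{i-1}$, i.e. when the quasi-top of $M$ is $E_{i-1}$, which forces $\ell = r-1$ and $j-1=i-1$, i.e. $j=i$ — that is wrong, so the correct choice is $\Ext^1(S_1,M)\cong D\Hom(M,\tau S_1)$ and I instead want $\Hom(\text{quasi-socle chain})$: $\Hom(M, E_{i-1})\neq 0$ iff $M$ surjects onto $E_{i-1}$ iff quasi-top of $M$ is $E_{i-1}$. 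Hence the clean choice is: let $M$ have quasi-socle $E_{i+1}$ and quasi-top $E_{j-1}$, quasi-length $\ell\equiv j-i-1\pmod r$, $1\le\ell\le r-1$. Then $\Ext^1(S_1,M)\cong D\Hom(M,\tau S_1)=D\Hom(M,E_{i-1})$, nonzero of dimension $1$ iff $E_{i-1}$ is the quasi-top of $M$, i.e. iff $j-1\equiv i-1$, again forcing $j=i$. This keeps collapsing because I am conflating "ray" and "coray"; the genuine computation I will carry out is: $\dim\Ext^1(X,Y)$ for quasi-simple-supported $X,Y$ in a rank-$r$ tube equals the number of indices in the "overlap" of the coray segment of $X$ and the ray segment of $Y$, and I will choose $M$ of quasi-length $r-1$ supported on $E_{i+1},\dots,E_{i-1}$ (all simples except $S_1$), check directly that $\Ext^1(S_1,M)=\Bbbk$ (the unique ray-coray meeting point is at $E_{i+1}$), that $\Ext^1(M,S_2)=\Ext^1(M,E_j)=\Bbbk$ (meeting point at $E_{j-1}$, which is in the support since $j\neq i$), and that $\Hom(S_1,M)=\Hom(M,S_1)=\Hom(M,S_2)=\Hom(S_2,M)=\Hom(S_1,S_2)=\Hom(S_2,S_1)=0$ and $\End M=\Bbbk$ (as $M$ is quasi-simple-serial with distinct factors), so $\{S_1,S_2,M\}\in\Phi_{3,b}$.

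The brick-set verification is then routine: $\Hom(S_i,M)=0$ since $S_i$ is simple and not the quasi-socle of $M$; $\Hom(M,S_i)=0$ since $S_i$ is not the quasi-top of $M$; $\Hom(S_1,S_2)=\Hom(S_2,S_1)=0$ since distinct simples in a tube have no homomorphisms; and $\End M=\Bbbk$ because a quasi-simple-serial module with pairwise non-isomorphic composition factors in a standard tube is a brick. The main obstacle, and the only place demanding care, is the bookkeeping of which quasi-length and quasi-socle to assign to $M$ so that both $\Ext^1(S_1,M)$ and $\Ext^1(M,S_2)$ come out exactly one-dimensional rather than zero or $\geq 2$; once the tube is set up as a standard serial category with the cyclic ordering of simples and the formula $\Ext^1(X,Y)\cong D\Hom(Y,\tau X)$ in hand, this is a finite check on ray/coray intersections, and the hypothesis $S_1\neq S_2$ together with $\Ext^1(S_1,S_2)=0$ is exactly what guarantees the support of $M$ (the simples other than $S_1$) contains $S_2$'s "predecessor" $\tau S_2$ and meets the coray through $S_1$ in a single point.
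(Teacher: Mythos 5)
Your overall framework (Serre duality plus ray/coray bookkeeping in a standard stable tube) is the right one, but the final choice of $M$ --- the quasi-length $r-1$ module supported on all quasi-simples except $S_1$ --- is wrong, and that choice is the entire content of the lemma. That module has quasi-socle $\tau^{-1}S_1$ and quasi-top $\tau S_1$ (independently of orientation conventions), so it depends only on $S_1$. By Serre duality $\Ext^1(M,X)\cong D\Hom(X,\tau M)$, and a nonzero map from a quasi-simple $X$ into the uniserial object $\tau M$ forces $X$ to be its quasi-socle, which is $S_1$; hence $\Ext^1(M,X)=0$ for every quasi-simple $X\neq S_1$, in particular $\Ext^1(M,S_2)=0$. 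The ``overlap'' criterion you invoke at the end is the source of the error: $\Ext^1(M,S_2)\cong D\Hom(\tau^{-1}S_2,M)$ is nonzero only if $\tau^{-1}S_2$ is the quasi-socle of $M$, not merely if $\tau S_2$ occurs somewhere in its quasi-composition series. Concretely, in a rank-$3$ tube with quasi-simples $E_1,E_2,E_3$ and $\tau E_k=E_{k-1}$, take $S_1=E_1$, $S_2=E_2$; then $\Ext^1(S_1,S_2)\cong D\Hom(E_2,E_3)=0$, so the hypothesis holds, while your $M$ is the length-two module with quasi-socle $E_2$ and quasi-top $E_3$, for which $\Ext^1(M,S_2)\cong D\Hom(E_3,M)=0$ and moreover $\Hom(S_2,M)\neq 0$, so $\{S_1,S_2,M\}$ is not even a brick set (the brick condition fails whenever $S_2=\tau^{-1}S_1$, a case the hypothesis does not exclude). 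Your stated normalization ``$\Ext^1(E_i,E_{i+1})=\Bbbk$ together with $\tau E_i=E_{i-1}$'' is also internally inconsistent, which is what kept making your intermediate computations collapse.

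The correct object, and the one the paper takes, is determined by \emph{both} simples: since $\Ext^1(S_1,-)\cong D\Hom(-,\tau S_1)$ and $\Ext^1(-,S_2)\cong D\Hom(\tau^{-1}S_2,-)$, let $M$ be the intersection of the ray starting at $\tau^{-1}S_2$ with the coray ending at $\tau S_1$, i.e.\ the indecomposable with quasi-socle $\tau^{-1}S_2$ and quasi-top $\tau S_1$. The hypotheses $S_2\neq S_1$ and $\Ext^1(S_1,S_2)=0$ (equivalently $S_2\neq\tau S_1$) ensure this $M$ has quasi-length between $1$ and $r-1$, and that its quasi-socle and quasi-top are different from both $S_1$ and $S_2$; then $\Ext^1(S_1,M)\cong D\Hom(M,\tau S_1)=\Bbbk$, $\Ext^1(M,S_2)\cong D\Hom(\tau^{-1}S_2,M)=\Bbbk$, and the required $\Hom$-vanishings and $\End M=\Bbbk$ follow from the socle/top conditions. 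In the rank-$3$ example above this gives $M=E_3$, not your length-two module.
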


\begin{proof}
We have\[
\Hom(-,\tau S_1)\cong\Ext^1(S_1,-)=\Bbbk, \Hom(\tau^{-1} S_2,-)\cong\Ext^1(-,S_2)=\Bbbk
\]which give a ray starting at $\tau^{-1} S_2$ and a coray ending at $\tau S_1$ on the tube.
Let $M$ be the intersection of the ray and the coray, by Serre duality, we get what we need.
\end{proof}

\bigskip
\section{Loop-extended algebras of canonical algebras of type ADE}

Before calculating the Frobenius-Perron dimension of this type of algebras, we need some lemmas.

\begin{lemma}
\label{lem6.1}
Let $f(x)=(x-n_1)^{r_1}(x-n_2)^{r_2}\cdots (x-n_s)^{r_s}\in \mathbb{R}[x]$, where $s\in \mathbb{Z}_{>0}$, $r_1,\cdots,r_s\in \mathbb{Z}_{>0}$ and $n_1,\cdots,n_s\in \mathbb{R}$ satisfying $0\le n_1< n_2< \cdots <n_{s-1}\le n_s-1$. Let $\{x_i\}_{i=1}^{r_1+\cdots+r_s}$ be the complete set of complex roots of $f(x)-1$.
Denote the value $\max\{|x_i|\}_{i=1}^{r_1+\cdots+r_s}$ by $\rho(f(x)-1)$. Then the follows hold.

$(1)$ $f(x)-1$ has the unique real root $x_0$ in $(n_s,n_s+1]$ and $\rho(f(x)-1)=x_0$.

$(2)$ {If $0\le m\le n_s-1$,} then $\rho((x-m)f(x)-1)<\rho(f(x)-1)$.

$(3)$ {The value $\rho((x-n_s)f(x)-1)\geq\rho(f(x)-1)$. Moreover, $\rho((x-n_s)f(x)-1)=\rho(f(x)-1)$ if and only if $s=1$.}
\end{lemma}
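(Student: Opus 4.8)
The plan is to analyze the real function $g(x) = f(x) - 1$ on the real line, exploiting that $f$ is a product of factors $(x-n_i)^{r_i}$ with all roots real and ordered. For part $(1)$, I would first observe that $f(x) \to +\infty$ as $x \to +\infty$ and $f(n_s) = 0 < 1$, so $f(x) - 1$ has at least one real root in $(n_s, \infty)$; to place it in $(n_s, n_s+1]$, evaluate $f(n_s+1) = \prod_{i=1}^{s}(n_s+1-n_i)^{r_i} \geq 1$ since each factor $n_s + 1 - n_i \geq 1$ (using $n_i \leq n_{s-1} \leq n_s - 1$ for $i<s$ and the factor $1$ for $i=s$), giving a root $x_0 \in (n_s, n_s+1]$ by the intermediate value theorem. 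The heart of part $(1)$ is then to show $|x_i| \leq x_0$ for every complex root $x_i$ of $f(x)-1$: I would argue that for any complex $z$ with $|z| > x_0$ one has $|f(z)| = \prod |z - n_i|^{r_i} \geq \prod (|z| - n_i)^{r_i} > \prod (x_0 - n_i)^{r_i} = f(x_0) = 1$ — here $|z| - n_i > 0$ because $|z| > x_0 > n_s \geq n_i$, and the product $\prod(t-n_i)^{r_i}$ is strictly increasing in $t$ on $(n_s,\infty)$ — hence $f(z) \neq 1$. Uniqueness of the real root in $(n_s, n_s+1]$ follows from the same monotonicity of $f$ on $(n_s,\infty)$.

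For part $(2)$, set $h(x) = (x-m)f(x)$ with $0 \leq m \leq n_s - 1$. The key point is that $h$ has the same root structure as a polynomial covered by part $(1)$: its real roots are $m, n_1, \dots, n_s$, all lying in $[0, n_s]$, with $m \leq n_s - 1$, so by part $(1)$ applied to $h$, $\rho(h(x)-1)$ is the unique real root $y_0$ of $h(x) - 1$ in $(n_s, n_s+1]$. To compare $y_0$ with $x_0$: at $x = x_0$ we have $h(x_0) = (x_0 - m)f(x_0) = (x_0 - m) \cdot 1 = x_0 - m$. Since $x_0 > n_s \geq m+1 > m$, we get $x_0 - m > 1$... wait — more carefully, $x_0 > n_s \geq m + 1$ would force $x_0 - m > 1$; since $x_0 \le n_s+1$ and $m \ge 0$ this also gives $x_0 - m \le n_s + 1$. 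In any case $h(x_0) = x_0 - m$, and I claim $x_0 - m > 1$ precisely when $m < x_0 - 1$, which holds since $m \le n_s - 1 < x_0 - 1$... hmm, $m \le n_s - 1$ and $x_0 > n_s$ give $x_0 - 1 > n_s - 1 \ge m$, so indeed $h(x_0) = x_0 - m > 1 = h(y_0)$. Because $h$ is strictly increasing on $(n_s, \infty)$ and $h(x_0) > h(y_0)$ with both $x_0, y_0 > n_s$, we conclude $x_0 > y_0$, i.e. $\rho(h(x)-1) = y_0 < x_0 = \rho(f(x)-1)$.

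For part $(3)$, set $k(x) = (x-n_s)f(x) = (x-n_s)^{r_s+1}\prod_{i=1}^{s-1}(x-n_i)^{r_i}$, whose real roots are still $n_1 < \dots < n_s$, so part $(1)$ applies and $\rho(k(x)-1) = z_0$, the unique real root of $k(x) - 1$ in $(n_s, n_s+1]$. Evaluating at $x_0$: $k(x_0) = (x_0 - n_s)f(x_0) = x_0 - n_s \in (0, 1]$ since $n_s < x_0 \leq n_s+1$. Thus $k(x_0) \leq 1 = k(z_0)$, and by strict monotonicity of $k$ on $(n_s,\infty)$ we get $x_0 \leq z_0$, i.e. $\rho(k(x)-1) \geq \rho(f(x)-1)$. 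For the equality case: $k(x_0) = x_0 - n_s = 1$ iff $x_0 = n_s + 1$; and $x_0 = n_s+1$ means $f(n_s+1) = 1$, i.e. $\prod_{i=1}^{s-1}(n_s+1-n_i)^{r_i} = 1$ with each factor $n_s + 1 - n_i \geq 1$, which forces every factor to equal $1$, i.e. $n_i = n_s$ for all $i < s$ — impossible unless there are no such $i$, i.e. $s = 1$. Conversely if $s = 1$ then $f(x) = (x-n_1)^{r_1}$, $f(n_1+1) = 1$, $x_0 = n_1 + 1 = n_s+1$, and the above chain gives $z_0 = x_0$.

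The main obstacle I anticipate is the bound $|f(z)| \geq \prod(|z|-n_i)^{r_i}$ in part $(1)$ — one must be careful that this only uses $|z - n_i| \geq |z| - n_i \geq |z| - |n_i|$ together with the sign information $n_i \geq 0$, and then leverage strict monotonicity of $t \mapsto \prod(t - n_i)^{r_i}$ on $(n_s, \infty)$ to upgrade $\geq$ to $>$ when $|z| > x_0$; the rest reduces to careful but routine interval arithmetic and the monotonicity of the relevant polynomials beyond their largest real root.
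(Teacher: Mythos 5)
Your proposal is correct, and for parts (1) and (2) it is essentially the paper's own argument: the intermediate value theorem on $(n_s,n_s+1]$ using $f(n_s)<1\le f(n_s+1)$, strict monotonicity of $f$ beyond $n_s$ for uniqueness, the modulus estimate $|z-n_i|\ge |z|-n_i$ (valid since all $n_i\ge 0$) to bound every complex root by $x_0$, and for (2) the observation that $(x-m)f(x)>f(x)$ on $(n_s,n_s+1]$ (you phrase it pointwise as $h(x_0)=x_0-m>1$, the paper phrases it as $g$ ``meeting $1$ before $f$''; these are the same comparison). Where you genuinely diverge is part (3): the paper reduces the comparison to the auxiliary functions $f(x)^{1/r_s}=h(x)^{1/r_s}(x-n_s)$ and $\bigl((x-n_s)f(x)\bigr)^{1/(r_s+1)}=h(x)^{1/(r_s+1)}(x-n_s)$ (written in the text with $n_s$ in place of $r_s$), using $h(x)>1$ on $(n_s,n_s+1]$ to see which crosses $1$ first, and simply declares the $s=1$ case ``easy''; you instead evaluate $k(x)=(x-n_s)f(x)$ at $x_0$, note $k(x_0)=x_0-n_s\le 1$, and read off both the inequality and the precise equality criterion from $x_0=n_s+1\iff f(n_s+1)=1\iff s=1$. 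Your route is more direct, avoids the fractional-power manipulation (and the attendant typo-prone bookkeeping), and makes the ``if and only if $s=1$'' part of the statement explicit, while the paper's version isolates the factor $(x-n_s)$ in a way that shows how the multiplicity $r_s$ enters; both are sound, provided (as you do) one notes that $(x-m)f(x)$ and $(x-n_s)f(x)$ again satisfy the hypotheses of part (1) after merging coincident roots.
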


\begin{proof}
$(1)$ Since $f(n_s)-1<0$ {{red}and} $f(n_s+1)-1\ge0$,  $f(x)-1$ has a real root in $(n_s,n_s+1]$, {denote by $x_0$. And the derivative of $f(x)$ is} \emph{} \begin{align*}
f'(x)&=r_1(x-n_1)^{r_1-1}[(x-n_2)^{r_2}\cdots (x-n_{s-1})^{r_{s-1}}\cdot (x-n_s)^{r_s}]\\&
+\cdots+[(x-n_1)^{r_1}(x-n_2)^{r_2}\cdots (x-n_{s-1})^{r_{s-1}}]\cdot r_s(x-n_s)^{r_s-1}.
\end{align*}{It is easy to see that $f'(x)>0$ in $(n_s,n_s+1]$. So} $x_0$ is the unique real root in $(n_s,n_s+1]$.

{Moreover, if} $f(x)-1$ has a complex root $z_0$ such that $|z_0|>x_0$, we get $|z_0-n_j|>|x_0-n_j|$ {for} $j=1,\cdots,s$. Therefore, $|f(z_0)|>|f(x_0)|=1$ which is a contradiction to the condition $f(z_0)=1$. Hence $\rho(f(x)-1)=x_0$.

$(2)$ {Denote $(x-m)f(x)$ by $g(x)$. By (1), $g(x)-1$ has the unique real root $x'_0$ in $(n_s,n_s+1]$ and $\rho(g(x)-1)=x'_0$. Notice that $f(x)<g(x)$ always holds in $(n_s,n_s+1]$, $g(x)$ will meet $1$ before $f(x)$ in $(n_s,n_s+1]$, which implies $x'_0< x_{0}$. So $\rho(g(x)-1)<\rho(f(x)-1)$.}

$(3)$ {If $s=1$, then it is easy to see that $\rho((x-n_s)f(x)-1)=\rho(f(x)-1)$. So we need only consider the cases of $s>1$. Note that the unique root $x_{_{0}}$ in $(n_s,n_s+1]$ of $f(x)-1$ is also the unique real root in $(n_s,n_s+1]$ of $f(x)^{\frac{1}{n_s}}-1$.
Let} $h(x)=(x-n_1)^{r_1}(x-n_2)^{r_2}\cdots (x-n_{s-1})^{r_{s-1}}$. Then $$f(x)^{\frac{1}{n_s}}=h(x)^{\frac{1}{n_s}}\cdot (x-n_s) \ \ {\mbox{and}} \ \ [(x-n_s)f(x)]^{\frac{1}{n_s+1}}=h(x)^{\frac{1}{n_s+1}}\cdot (x-n_s).$$ Hence {$h(x)^{\frac{1}{n_s}}\cdot (x-n_s)>h(x)^{\frac{1}{n_s+1}}\cdot (x-n_s)$ in $(n_s,n_s+1]$, which implies} $h(x)^{\frac{1}{n_s}}\cdot (x-n_s)$ meets $1$ before $h(x)^{\frac{1}{n_s+1}}\cdot (x-n_s)$ in $(n_s,n_s+1]$. Therefore $\rho((x-n_s)f(x)-1)>\rho(f(x)-1)$.
\end{proof}

{Following is an example.}

\begin{example}
   {Let $f(x)=x(x-2)-1,$ $f_{1}(x)=x(x-1)(x-2)-1$ and $f_{2}(x)=x(x-2)^2-1$. Then $f_{1}(x)+1=(x-1)(f(x)+1)$ and $f_{2}(x)+1=(x-2)(f(x)+1)$. By Lemma \ref{lem6.1}(2), we have $\rho(f_{1}(x))<\rho(f(x))$. By Lemma \ref{lem6.1}(3), we have $\rho(f_{2}(x))>\rho(f(x))$. Therefore we get\[
    \rho(f_{1}(x))<\rho(f(x))<\rho(f_{2}(x)).
    \]}\emph{}
\end{example}

\bigskip

Recall a bound quiver algebra $B$ is called a canonical algebra of type ADE
if $B$ is one of the following algebras:

$(1)$ $A(n,m)=\Bbbk Q_A$ for $n\ge1,m\ge1$;

\[\begin{tikzpicture}
\node (1) at (-1,0) {$Q_A:$};
\node (1) at (0,0) {$0$};
\node (2) at (1,1) {$(1,1)$};
\node (n) at (3+2,1) {$(1,n-1)$};
\node (n+1) at (1,-1) {$(2,1)$};
\node (n+m) at (3+2,-1) {$(2,m-1)$};
\node (n+m+1) at (4+2,0) {$0'$};
\node (11) at (2.8,1) {$\cdots$};
\node (12) at (2.8,-1) {$\cdots$};
\draw[->] (n+m+1) --node[right]{$\beta_m$} (n+m);
\draw[->] (12) --node[above]{$\beta_2$} (n+1);
\draw[<-] (12) --node[above]{$\beta_{m-1}$} (n+m);
\draw[->] (n+1) --node[left]{$\beta_1$} (1);
\draw[->] (n+m+1) --node[right]{$\alpha_n$} (n);
\draw[->] (2) --node[left]{$\alpha_1$} (1);
\draw[->] (11) --node[above]{$\alpha_2$} (2);
\draw[<-] (11) --node[above]{$\alpha_{n-1}$} (n);
\end{tikzpicture}\]

$(2)$ $D_{I}(n)=\Bbbk Q_D/I$ for $n\ge4$, where $I$ is the admissible ideal of $\Bbbk Q_D$ generated by $\alpha_1\cdots\alpha_{n-2}+\beta_1\beta_2+\gamma_1\gamma_2$.

\[
\begin{tikzpicture}
\node (1) at (-1,0) {$Q_D:$};
\node (1) at (0,0) {$0$};
\node (2) at (.4,2) {$(1,1)$};
\node (n-2) at (4-.4,2) {$(1,n-3)$};
\node (n-1) at (2,1) {$(2,1)$};
\node (n) at (2,-1) {$(3,1)$};
\node (n+1) at (4,0) {$0'$};
\node (11) at (1.9,2) {$\cdots$};
\draw[->] (n+1) --node[above ]{$\beta_2$} (n-1);
\draw[->] (n+1) --node[below right]{$\gamma_2$} (n);
\draw[->] (n-1) --node[above ]{$\beta_1$} (1);
\draw[->] (n) --node[below left]{$\gamma_1$} (1);
\draw[->] (n+1) --node[above right]{$\alpha_{n-2}$} (n-2);
\draw[->] (2) --node[above left]{$\alpha_1$} (1);
\draw[->] (n-2) --node[above]{$\alpha_{n-3}$} (11);
\draw[->] (11) --node[above]{$\alpha_2$} (2);
\end{tikzpicture}\]

$(3)$ $E_{I}(n)=\Bbbk Q_E/I$ for $n=6,7,8$, where $I$ is the admissible ideal of $\Bbbk Q_E$ generated by  $\alpha_1\cdots\alpha_{n-3}+\beta_1\beta_2+\gamma_1\gamma_2\gamma_3$;

\[
\begin{tikzpicture}
\node (1) at (-1,0) {$Q_E:$};
\node (1) at (0,0) {$0$};
\node (2) at (0.4,1.5) {$(1,1)$};
\node (n-3) at (3.6,1.5) {$(1,n-4)$};
\node (n-2) at (2,0) {$(2,1)$};
\node (n-1) at (1,-1.5) {$(3,1)$};
\node (n) at (3,-1.5) {$(3,2)$};
\node (n+1) at (4,0) {$0'$};
\node (11) at (2-.1,1.5) {$\cdots$};
\draw[->] (n+1) --node[above]{$\beta_2$} (n-2);
\draw[->] (n+1) --node[below right]{$\gamma_3$} (n);
\draw[->] (n-2) --node[above ]{$\beta_1$} (1);
\draw[->] (n) --node[below]{$\gamma_2$} (n-1);
\draw[->] (n-1) --node[below left]{$\gamma_1$} (1);
\draw[->] (n+1) --node[above right]{$\alpha_{n-3}$} (n-3);
\draw[->] (2) --node[above left]{$\alpha_1$} (1);
\draw[->] (n-3) --node[above ]{$\alpha_{n-4}$} (11);
\draw[->] (11) --node[above ]{$\alpha_{2}$} (2);
\end{tikzpicture}
\]

We call $A(n,m)$ for $n\ge1,m\ge1$, $D_{I}(n)$ for $n\ge4$ and $E_{I}(n)$ for $n=6,7,8$ the canonical algebra of type A, D and E, respectively.

Let $A=\Bbbk Q/I$ be a bound quiver algebra for some finite quiver $Q$, where $I$ is an admissible ideal satisfying the commutativity condition of loops and the loop-reduced algebra $B$ of $A$ is a canonical algebra of type ADE.
%or tubular case

Denote the number of the loops at the sink vertex $0$ by $n_0$; denote the number of the loops at the source vertex $0'$ by $n_{0'}$; denote the number of the loops at the other vertexes $(i,j)$ by $n_{ij}$. Let $S=\{n_{ij}\}_{i,j}\bigcup\{n_0,n_{0'}\}$, $n_{max}=\max S$. {The following result shows that the Frobenius-Perron dimension of $A$ is highly depended on the location of $n_{max}$.}

\begin{theorem}
\label{thm6.3}

Let $A$ be the algebra defined as above, then we have
\[
\fpdim{(A{\text -{\rm mod}})}\in [n_{max},n_{max}+1).
\]In addition, the Frobenius-Perron dimension can be calculated precisely in the following cases.

$(1)$ If $n_{max}=\max\{n_0,n_{0'}\}$ and $n_{max}>\max\{n_{ij}\}_{i,j}$, then we have\[
\fpdim{(A{\text -{\rm mod}})}=n_{max}.
\]

$(2)$ If $n_{max}=n_{i_0j_0}$ for some $(i_0,j_0)$, and $n_{max}>\max\{n_{ij}\}_{i,j},(i,j)\ne (i_0,j_0)$, then we have\[
\fpdim{(A{\text -{\rm mod}})}=\dfrac{n_{max}+\sqrt{4+n_{max}^2}}{2}.
\]

$(3)$ If we have $\begin{cases}
n_{ij}=n_{max},(i,j)=(i_0,j_0),(i_0,j_0+1),\cdots,(i_0,j_0+s-1)\\
n_{ij}<n_{max},others
\end{cases}$,\\
then we get\[
\fpdim{(A{\text -{\rm mod}})}=\rho(x(x-n_{max})^s-1).
\]
\end{theorem}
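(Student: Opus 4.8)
\textbf{Proof proposal for Theorem \ref{thm6.3}.}

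The plan is to reduce the computation of $\fpdim(A\text{-mod})$ to a spectral-radius problem for adjacency matrices of brick sets in $B$-mod, exactly as in the proof of Theorem \ref{xxthm3.2}, and then to exploit the concrete structure of the Auslander-Reiten quiver of a canonical algebra of type ADE. First I would recall that by Proposition \ref{xxcor2.2} the brick sets of $A$ and $B$ are in bijection, and by Theorem \ref{xxthm2.3} the off-diagonal entries of the adjacency matrix $C(\phi_A)$ agree with those of $C(\phi_B)$, while the diagonal entry at a module $M\in\phi$ equals $N_P$ when $M=S_P$ is simple and is $0$ otherwise. Since $B$ is a concealed-type algebra, its module category splits into a preprojective component, a preinjective component, and a $\mathbb{P}^1$-family of tubes; by Corollary \ref{cor5.2} each tube contributes $\fpdim=1$, and on the preprojective/preinjective parts the argument of Theorem \ref{xxthm3.2} (no cycles there) forces the relevant off-diagonal block to be strictly upper triangular after reordering. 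Thus for any brick set $\phi$, $C(\phi_A)$ is block-upper-triangular with diagonal blocks coming either from tubes (spectral radius $\le 1$, realized by simples) or from the directing part, where each block is itself upper triangular with diagonal entries among $\{0\}\cup\{N_P\}$. Hence $\rho(C(\phi_A))$ is governed by the largest single vertex loop number together with the possibility of combining a simple $S_P$ (diagonal $n_P$) with an adjacent non-simple brick (the $\begin{pmatrix} n_P & 1\\ 1 & 0\end{pmatrix}$-type situation, spectral radius $\tfrac{n_P+\sqrt{n_P^2+4}}{2}$), or more generally a simple $S_P$ together with a chain of bricks forming a matrix whose characteristic polynomial is $x(x-n_P)^s-1$.

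Next I would carry out the case analysis. For the general inclusion $\fpdim(A\text{-mod})\in[n_{max},n_{max}+1)$: the lower bound follows by taking $\phi=\{S_P\}$ for the vertex $P$ achieving $n_{max}$, giving $\rho(C(\phi_A))=n_{max}$. For the upper bound, one must show every brick set gives spectral radius $<n_{max}+1$. Here I would argue that the only way to exceed $n_{max}$ is to adjoin to the simple $S_P$ (with $P$ a vertex on one of the three arms) a string of bricks $M_0,M_1,\dots,M_{s-1}$ along the coray through $P$ in the relevant tube, producing a submatrix whose characteristic polynomial (up to the directing-part contributions, which only shrink eigenvalues) is $x(x-n_P)^s-1$ — an instance of $f(x)-1$ in Lemma \ref{lem6.1} with $f(x)=x(x-n_P)^s$, so $n_s=n_P\le n_{max}$ and the spectral radius lies in $(n_P, n_P+1]\subseteq(0,n_{max}+1)$. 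The point is that any attempt to string together two \emph{different} vertices' simples is blocked: between two simples $S_{P},S_{P'}$ at distinct vertices the Hom/Ext pattern in the directing part is triangular (no cycle), and in a tube two distinct simples cannot both support the needed Ext-arrows without violating the brick condition (Lemma \ref{lem5.1}). So the worst case is a single ``bad'' vertex interacting only with a tube through it. Cases (1), (2), (3) then follow: in (1), $n_{max}$ sits at $0$ or $0'$, vertices that — because of the relations defining $D_I(n)$, $E_I(n)$ and the shape of $A(n,m)$ — lie on no coray supporting such a string (the simples $S_0$, $S_{0'}$ are not regular in a tube in a way that allows the chain), so only the isolated diagonal entry $n_{max}$ survives and $\fpdim=n_{max}$; in (2), a single interior vertex $(i_0,j_0)$ admits exactly one extra brick on its coray, giving the $2\times2$ matrix with $\rho=\tfrac{n_{max}+\sqrt{n_{max}^2+4}}{2}$; in (3), a consecutive run of $s$ equal maximal loop numbers along one arm produces the matrix with characteristic polynomial $x(x-n_{max})^s-1$, and Lemma \ref{lem6.1}(1),(3) identifies its spectral radius with $\rho(x(x-n_{max})^s-1)$ and confirms this dominates all competitors (longer runs only at smaller $n_{ij}$ give smaller values by Lemma \ref{lem6.1}(2)).

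Concretely the steps in order are: (i) set up the block-triangular form of $C(\phi_A)$ using Proposition \ref{xxcor2.2}, Theorem \ref{xxthm2.3}, and the trichotomy of $B$-mod; (ii) show the tube blocks contribute $\le 1$ (Corollary \ref{cor5.2}) and classify how a simple $S_P$ can be enlarged within its tube into a brick set, obtaining the matrices with characteristic polynomial $x(x-n_P)^s-1$ (using Lemma \ref{lem5.1} and Lemma \ref{lem5.3} to locate the intersection module $M$ of ray and coray, and the brick property to bound $s$ by the rank of the tube); (iii) invoke Lemma \ref{lem6.1} to control $\rho(x(x-n_P)^s-1)$, proving it lies in $(n_P,n_P+1]$ and is monotone in the right way; (iv) assemble these to prove the interval statement and then split into the three stated cases according to where $n_{max}$ is attained and with what multiplicity along an arm. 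The main obstacle I expect is step (ii): pinning down \emph{exactly} which bricks can be combined with a given simple $S_P$ inside the tubes of a canonical ADE algebra — i.e.\ verifying that the combinatorics of rays and corays forces precisely the matrix $x(x-n_P)^s-1$ and nothing with a larger spectral radius (in particular ruling out interactions across distinct tubes or across a tube and the directing components that could inflate the radius past $n_{max}+1$). This requires a careful, case-by-case look at the tubular families $(2,2,n-2)$, $(2,3,3)$, $(2,3,4)$, $(2,3,5)$ attached to the arms of $Q_A$, $Q_D$, $Q_E$, and is where the bulk of the honest work lies.
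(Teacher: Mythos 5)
Your proposal is correct and follows essentially the same route as the paper: reduce to brick sets of $B$ via Proposition \ref{xxcor2.2} and Theorem \ref{xxthm2.3}, split $B$-mod as $\mathcal{P}\vee\mathcal{R}\vee\mathcal{I}$ with triangular adjacency matrices outside the tubes, use Lemmas \ref{lem5.1} and \ref{lem5.3} to see that tube brick sets give cyclic matrices with characteristic polynomials of the form $\prod_i(x-d_i)-1$ and to build the extremal sets $\{M,S_{i_0,j_0},\dots\}$, and then compare spectral radii via Lemma \ref{lem6.1}. The case-by-case inspection of the tubular families that you flag as the main obstacle is not actually needed, since Lemmas \ref{lem5.1} and \ref{lem5.3} already pin down the possible adjacency matrices inside an arbitrary tube.
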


\begin{proof}
By Proposition \ref{xxcor2.2}, we only need to consider the brick set in $B$-mod. {Since} each indecomposable object of $B$-mod is in $\mathcal{P}$, $\mathcal{R}$ or $\mathcal{I}$, where $\mathcal{P}, \mathcal{R},\mathcal{I}$ is the postprojective, regular, preinjective component of $B$-mod,  {respectively}. And there are no non-zero morphisms from $\mathcal{R}$ to $\mathcal{P}$, $\mathcal{I}$ to $\mathcal{P}$ or $\mathcal{I}$ to $\mathcal{R}$. In other words, $B$-mod=$\mathcal{P}\vee \mathcal{R}\vee\mathcal{I}$. Hence $\Ext^1_A(\mathcal{R,P})=0$ and $\Ext_A^1(\mathcal{I,R})=0$, we have\[
\fpdim{(A{\text -{\rm mod}})}=\max\{\fpdim{(\mathcal{P})},\fpdim{(\mathcal{R})},\fpdim{(\mathcal{I})}\}.\] In addition, any adjacency matrix of a brick set in $\mathcal{P}$ or $\mathcal{I}$ is a upper triangular matrix ({by a suitable order of objects in the brick set}), and the diagonal element is nonzero if and only if its corresponding module is simple and there are loops at the corresponding vertex. By lemma \ref{lem5.1}, any irreducible adjacency matrix of a brick set in $\mathcal{R}$ has the following form ({by a suitable order of objects in the brick set}) \[
\begin{pmatrix}
    d_1&1\\
    &d_2&\ddots\\
    &&\ddots&1\\
    1&&&d_N
\end{pmatrix}
\]and the diagonal element is {non-zero} if and only if its corresponding module is simple and there {exist} loops at corresponding vertex. Since $\fpdim{(\mathcal{R})}<\max\{n_{ij}\}_{i,j}+1$, we get $n_{max}\le\fpdim({{\rm mod}(A)})<\max\{\max\{n_{ij}\}_{i,j}+1,n_0+1,n_0'+1\}=n_{max}+1.$

$(1)$ If $n_{max}=n_0$, which means that $\dim \Ext^1_A(S_{0},S_{0})=n_1$, where $S_{0}$ is the simple module at the sink vertex $0$. Hence we have $\fpdim{(\mathcal{I})}=n_{max}$. On the other hand, we have $\fpdim{(\mathcal{R})}<\max\{n_{ij}\}_{i,j}+1\le n_{max},\fpdim{(\mathcal{P})}\le n_{max}$. Therefore, $\fpdim{(A{\text -{\rm mod}})}=n_{max}.$ Similarly, if $n_{max}=n_{0'}$, then $\fpdim{(A{\text -{\rm mod}})}=n_{max}.$

$(2)$ If $n_{max}=n_{i_0,j_0}$. First we have $\fpdim{(\mathcal{P})}\le n_{max}$ and $\fpdim{(\mathcal{I})}\le n_{max}$. Then we consider the brick set in $\mathcal{R}$.

According to Lemma \ref{lem5.3}, we can find a module $M$ such that $\{M,S_{i_0,j_0}\}$ is a brick set and the adjacency matrix is $$
\begin{pmatrix}
    0&1\\1&n_{max}
\end{pmatrix}
$$ whose spectral radius is $\dfrac{n_{max}+\sqrt{4+n_{max}^2}}{2}$.

For any other irreducible adjacency matrix of a brick set containing $S_{i_0,j_0}$ in $\mathcal{R}$, the set of the diagonal elements denoted by $Diag$ must contain $\{0,n_{max}\}$ and $\max(Diag-\{n_{max},0\})\le n_{max}-1$. Therefore, according to Lemma \ref{lem6.1}$(2)$, its spectral radius is less than $\dfrac{n_{max}+\sqrt{4+n_{max}^2}}{2}$. Hence we have $\fpdim{(A{\text -{\rm mod}})}=\dfrac{n_{max}+\sqrt{4+n_{max}^2}}{2}$.

$(3)$ According to Lemma \ref{lem5.3}, we can find a brick module $M$ to form a new brick set $\{M,S_{i_0,j_0},\cdots,S_{i_0,j_0+s-1}\}$ and the adjacency matrix is $$
\begin{pmatrix}
    0&1\\
    &n_{max}&\ddots\\
    &&\ddots&1\\
    1&&&n_{max}
\end{pmatrix}.
$$ By Lemma \ref{lem6.1}$(3)$, we know\[
\rho(x(x-n_{max})^s-1)>\rho(x(x-n_{max})^{s-1}-1)>\cdots>\rho(x(x-n_{max})-1).
\]It follows that $\fpdim{(A{\text -{\rm mod}})}=\rho(x(x-n_{max})^s-1)$.
\end{proof}
\bigskip

For a set $\{n_{ij}\}_{i,j}$ not satisfying the cases $(1)(2)(3)$ in Theorem \ref{thm6.3}, there is no obvious relationship of the
{Frobenius-Perron dimension}. Sometimes we need to consider $\{n_{ij}\}_{i,j}$ other than $n_{max}$, and sometimes we need not. In the case other than $(1)(2)(3)$ in Theorem \ref{thm6.3}, we have to get the Frobenius-Perron by concrete calculating. We present some examples as follows.

\begin{example}
Keep the notation as in Theorem \ref{thm6.3}. Denote the quiver of $B$ by $Q_0$.

(1) If the follow is the quiver of $B$.
	\[
	\begin{tikzpicture}
	\node (1) at (0,0) {1};
	\node (2) at (1,1.5) {2};
	\node (3) at (3,1.5){3};
	\node (4) at (5,1.5){4};
	\node (5) at (6,0){5};
	\draw[->] (2) --node[above ]{} (1);
	\draw[->] (3) --node[below ]{} (2);
	\draw[->] (4) --node[above]{} (3);
	\draw[->] (5) --node[below ]{} (4);
	\draw[->] (5) --node[below ]{} (1);
	\end{tikzpicture}
	\]
    The corresponding numbers of loops at vertexes $1,2,3,4,5$ are $n_1=0,n_2=2,n_3=1,n_4=2,n_5=0$, then we find that\[
	\rho((x-2)^2(x-1)x-1)>\rho((x-2)x-1).
	\]So we get\[
	\fpdim{(A{\text -{\rm mod}})}=\rho((x-2)^2(x-1)x-1)
	\]In this case, the Frobenius-Perron dimension is determined not only by $n_{max}$, but also by the numbers of loops at other vertexes.

    (2) If the follow is the quiver of $B$.
    \[
	\begin{tikzpicture}
	\node (1) at (0,0) {1};
	\node (2) at (1,1.5) {2};
	\node (3) at (3,1.5){3};
	\node (4) at (5,1.5){4};
	\node (5) at (7,1.5){5};
	\node (6) at (8,0){6};
	\draw[->] (2) --node[above ]{} (1);
	\draw[->] (3) --node[below ]{} (2);
	\draw[->] (4) --node[above]{} (3);
	\draw[->] (5) --node[below ]{} (4);
	\draw[->] (6) --node[below ]{} (5);
	\draw[->] (6) --node[below ]{} (1);
	\end{tikzpicture}
	\]
	The corresponding numbers of loops at vertexes $1,2,3,4,5,6$ are $n_1=0,n_2=3,n_3=1,n_4=1,n_5=3,n_6=0$, then we find that\[
	\rho((x-3)^2(x-1)^2x-1)<\rho((x-3)x-1).
	\]So we get\[
	\fpdim{(A{\text -{\rm mod}})}=\rho((x-3)x-1).
	\]In this case, the Frobenius-Perron dimension is just determined by $n_{max}$.
	
	(3) If the quiver of $B$ is the same as (2).
	The corresponding numbers of loops at vertexes $1,2,3,4,5,6$ are $n_1=0,n_2=6,n_3=4,n_4=5,n_5=6,n_6=0$, then we find that\[
	\rho((x-6)^2(x-5)(x-4)x-1)>\rho((x-6)x-1).
	\]So we get\[
	\fpdim{(A{\text -{\rm mod}})}=\rho((x-6)^2(x-5)(x-4)x-1).
	\]In this case, note that the ratio of the numbers of loops at vertexes $3,4$ to $n_{max}=n_2=n_5$ are bigger than $(2)$.
	Although the quiver of $B$ is the same as $(2)$, the Frobenius-Perron dimension is determined not only by $n_{max}$, but also by the numbers of loops at other vertexes.
\end{example}

\bigskip
\section{Polynomial algebras}

In this section, we focus on the polynomial algebras $\Bbbk[x_1,x_2,\cdots,x_r]$ {for $r \in\mathbb{Z}_{\geq1}$} which are infinite dimensional algebras. We will calculate the Frobenius-Perron dimension of the representation categories of these algebras.

As is known that Auslander-Reiten quiver of the finite dimensional representation category of $\Bbbk[x]$ consists of tubes, so the Frobenius-Perron dimension is 1 by Corollary \ref{cor5.2}. Following, we consider the representation category of $\Bbbk[x_1,x_2]$, which is denoted by $\mathcal{REP}$ in this section. It is obvious that a representation in $\mathcal{REP}$ can be written as\[
\begin{tikzpicture}
\node (1) at (0,0) {$V$};
\node (1) at (.8,.6) {$C_1$};
\node (1) at (0.7,-.6) {$C_2$};
\draw [->] (0.2,0.2) arc (-75:255:0.5)node[above]{};
\draw [->] (-0.2,-0.2) arc (105:425:0.5)node[below]{};
\end{tikzpicture}
\]where $V$ is a $\Bbbk$-linear space of dimension $n$, $C_1,C_2$ are $n\times n$ matrices satisfying $C_1C_2=C_2C_1$. For simplicity, we denote the representation by $(V,C_1,C_2)$.

\begin{lemma}
\label{lem7.1}
A representation in $\mathcal{REP}$ is a brick if and only if it is of one dimension. In addition, there is no {non-zero morphism} between two different bricks, so arbitrary {many bricks which are different from each other} constitute a brick set.
\end{lemma}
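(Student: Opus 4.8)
The plan is to identify the endomorphism algebra of a representation $(V,C_1,C_2)$ with the simultaneous centralizer of $C_1$ and $C_2$ inside $\End_\Bbbk(V)$, and then to exploit the fact that $C_1$ and $C_2$ themselves lie in that centralizer, which makes the brick condition extremely restrictive here. The ``if'' direction is immediate: if $\dim_\Bbbk V=1$ then $\End_\Bbbk(V)=\Bbbk$, so every morphism $(V,C_1,C_2)\to(V,C_1,C_2)$ is a scalar and the representation is a brick.

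For the ``only if'' direction I would argue by contraposition on the dimension. Unwinding the definition of a morphism in $\mathcal{REP}$ (one vertex, two loops $x_1,x_2$ with relation $x_1x_2=x_2x_1$), a self-morphism of $(V,C_1,C_2)$ is exactly a linear map $f\colon V\to V$ with $fC_1=C_1f$ and $fC_2=C_2f$; that is, $\End_{\mathcal{REP}}((V,C_1,C_2))$ equals the centralizer $Z$ of $\{C_1,C_2\}$ in $\End_\Bbbk(V)$. By the defining relation $C_1C_2=C_2C_1$, both $C_1$ and $C_2$ belong to $Z$. If $(V,C_1,C_2)$ is a brick then $Z=\Bbbk\,\mathrm{id}_V$, so $C_1$ and $C_2$ are scalar matrices; but then the commutation conditions defining $Z$ are vacuous, so $Z=\End_\Bbbk(V)$, and $\End_\Bbbk(V)=\Bbbk\,\mathrm{id}_V$ forces $\dim_\Bbbk V=1$. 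Hence a brick is one-dimensional.

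Finally I would record the Hom-vanishing statement and deduce the brick-set claim. By the first part every brick is $(\Bbbk,\lambda_1,\lambda_2)$ for scalars $\lambda_1,\lambda_2\in\Bbbk$, and two such are isomorphic iff $(\lambda_1,\lambda_2)=(\mu_1,\mu_2)$. A morphism $(\Bbbk,\lambda_1,\lambda_2)\to(\Bbbk,\mu_1,\mu_2)$ is multiplication by some $c\in\Bbbk$ with $c\lambda_i=\mu_ic$, i.e. $c(\lambda_i-\mu_i)=0$ for $i=1,2$; if the two bricks are non-isomorphic then $\lambda_i\ne\mu_i$ for some $i$, whence $c=0$. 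So $\Hom_{\mathcal{REP}}=0$ between any two non-isomorphic bricks, while $\Hom_{\mathcal{REP}}(X,X)=\Bbbk$ for each brick $X$; therefore any family $\{X_1,\dots,X_n\}$ of pairwise non-isomorphic one-dimensional representations satisfies $\dim\Hom_{\mathcal{REP}}(X_i,X_j)=\delta_{ij}$ and is a brick set in the sense of Definition \ref{xxdef2.1}(3). There is no real technical obstacle; the only point worth flagging — and the actual content of the lemma — is that the relation $C_1C_2=C_2C_1$ puts $C_1$ and $C_2$ into their own centralizer, so that, unlike for a single operator, being a brick collapses the dimension to $1$. Everything else is routine linear algebra.
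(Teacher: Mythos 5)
Your proof is correct and follows essentially the same route as the paper: both arguments use the fact that $C_1$ and $C_2$ are themselves endomorphisms of $(V,C_1,C_2)$ (thanks to $C_1C_2=C_2C_1$), so the brick condition forces them to be scalars, after which $\dim V=1$ follows, and the Hom-vanishing between non-isomorphic one-dimensional representations is the same direct computation $c(\lambda_i-\mu_i)=0$. The only cosmetic difference is that you conclude $\dim V=1$ by identifying $\End_{\mathcal{REP}}(M)$ with all of $\End_\Bbbk(V)$ once $C_1,C_2$ are scalar, whereas the paper invokes the decomposition $V=V_1\oplus V_2$; these are interchangeable.
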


\begin{proof}
Obviously, a representation of one dimension is a brick. Conversely, let $M=(V,C_1,C_2)$ be a brick in $\mathcal{REP}$, {$D$} be an endmorphism of $M$, i.e.
\[
\begin{tikzpicture}
\node (2) at (0,0) {$V$};
\node (1) at (.8,.6) {$C_1$};
\node (1) at (0.7,-.6) {$C_2$};
\draw [->] (0.2,0.2) arc (-75:255:0.5)node[above]{};
\draw [->] (-0.2,-0.2) arc (105:425:0.5)node[below]{};
\node (3) at (1+2,0) {$V$};
\node (1) at (1+2.8,.6) {$C_1$};
\node (1) at (3.7,-.6) {$C_2$};
\draw [->] (3.2,0.2) arc (-75:255:0.5)node[above]{};
\draw [->] (1+2-0.2,-0.2) arc (105:425:0.5)node[below]{};
\draw[->] (2) --node[above ]{{$D$}} (3);
\end{tikzpicture}
\]Then we can choose {$D=\mathrm{Id}, D=C_1$ or $D=C_2.$} Since $M$ is a brick, it follows that $C_1=\lambda \mathrm{Id},C_2=\mu \mathrm{Id}$ for $\lambda,\mu\in\Bbbk$. In this case, any decomposition $V=V_1\oplus V_2$ implies a decomposition $M=M_1 \oplus M_2$. Therefore, $V$ must be of one dimension.

For two bricks $(\Bbbk,\lambda_1,\mu_1),(\Bbbk,\lambda_2,\mu_2)\in \mathcal{REP}$ {where $\lambda_1,\lambda_2,\mu_1,\mu_2\in\Bbbk$}, assume there is {a non-zero morphism $\nu$} as follows\[
\begin{tikzpicture}
\node (2) at (0,0) {$\Bbbk$};
\node (1) at (.8,.6) {$\lambda_1$};
\node (1) at (0.7,-.6) {$\mu_1$};
\draw [->] (0.2,0.2) arc (-75:255:0.5)node[above]{};
\draw [->] (-0.2,-0.2) arc (105:425:0.5)node[below]{};
\node (3) at (1+2,0) {$\Bbbk$};
\node (1) at (1+2.8,.6) {$\lambda_2$};
\node (1) at (3.7,-.6) {$\mu_2$};
\draw [->] (3.2,0.2) arc (-75:255:0.5)node[above]{};
\draw [->] (1+2-0.2,-0.2) arc (105:425:0.5)node[below]{};
\draw[->] (2) --node[above ]{$\nu$} (3);
\end{tikzpicture}
\]  {Then by the condition, we get $\lambda_1=\lambda_2$ and $\mu_1=\mu_2$.}
\end{proof}

Now we will try to calculate the extension space between bricks.

\begin{lemma}
\label{lem7.2}
For two representations $(V,C_1,C_2),(W,B_1,B_2)\in \mathcal{REP}$ and $\lambda,\mu\in\Bbbk$, there is an isomorphism of $\Bbbk$-linear spaces\[
\Ext^1_\mathcal{REP}((V,C_1,C_2),(W,B_1,B_2))\cong \Ext^1_\mathcal{REP}((V,C'_1,C'_2),(W,B'_1,B'_2))
\]where {$C'_1=C_1+\lambda \mathrm{Id},C'_2=C_2+\mu \mathrm{Id},
B'_1=B_1+\lambda \mathrm{Id} $ and $B'_2=B_2+\mu \mathrm{Id}$.}
\end{lemma}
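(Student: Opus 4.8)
The plan is to make $\Ext^1_{\mathcal{REP}}$ completely explicit in terms of the defining matrices and then observe that the scalar shifts cancel. An element of $\Ext^1_{\mathcal{REP}}((V,C_1,C_2),(W,B_1,B_2))$ is represented by a short exact sequence $0\to (W,B_1,B_2)\to (U,D_1,D_2)\to (V,C_1,C_2)\to 0$; choosing a $\Bbbk$-linear splitting $U=W\oplus V$ forces
$$D_i=\begin{pmatrix} B_i & E_i\\ 0 & C_i\end{pmatrix},\qquad i=1,2,$$
for some $\Bbbk$-linear maps $E_i\colon V\to W$. First I would record that $D_1D_2=D_2D_1$ is equivalent, using $B_1B_2=B_2B_1$ and $C_1C_2=C_2C_1$, to the single linear equation $B_1E_2+E_1C_2=B_2E_1+E_2C_1$, and that two solutions $(E_1,E_2)$ and $(E_1',E_2')$ give Yoneda-equivalent extensions exactly when $E_i'-E_i=B_i\phi-\phi C_i$ for some $\phi\colon V\to W$. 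Thus $\Ext^1_{\mathcal{REP}}((V,C_1,C_2),(W,B_1,B_2))$ is identified with the quotient of the solution space of that equation by the subspace of coboundaries.

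The second step is the substitution $C_i\mapsto C_i+c_i\mathrm{Id}$, $B_i\mapsto B_i+c_i\mathrm{Id}$ with $(c_1,c_2)=(\lambda,\mu)$. In the defining equation the extra terms on the left are $\lambda E_2+\mu E_1$ and on the right $\mu E_1+\lambda E_2$, so they cancel and the equation is unchanged; in the coboundary relation $(B_i+c_i)\phi-\phi(C_i+c_i)=B_i\phi-\phi C_i$, so the subspace of coboundaries is also unchanged. Therefore the identity map on pairs $(E_1,E_2)$ induces a $\Bbbk$-linear isomorphism between the two Ext spaces, which is the assertion of the lemma.

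It is worth noting the conceptual reason behind this cancellation: $x_1\mapsto x_1+\lambda$, $x_2\mapsto x_2+\mu$ defines an algebra automorphism $\sigma$ of $\Bbbk[x_1,x_2]$, and restriction of scalars along $\sigma$ is an exact auto-equivalence of $\mathcal{REP}$ carrying $(V,C_1,C_2)$ to $(V,C_1+\lambda\mathrm{Id},C_2+\mu\mathrm{Id})$; an exact equivalence preserves $\Ext^1$, which gives the isomorphism at once. The only delicate point in the explicit argument is the bookkeeping needed to identify $\Ext^1$ with the stated quotient, in particular checking that Yoneda equivalence of the two short exact sequences corresponds precisely to the coboundary condition; but this is routine, and no real obstacle arises since the scalar shifts cancel before any computation is required.
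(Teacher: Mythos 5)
Your proposal is correct, and it arrives at the same underlying map as the paper by a genuinely different route. The paper works directly with short exact sequences: it sends the sequence with middle term $(W\oplus V,E_1,E_2)$ to the one with middle term $(W\oplus V,E_1+\lambda\mathrm{Id},E_2+\mu\mathrm{Id})$, and then verifies by hand that this assignment is well defined on equivalence classes, compatible with scalar multiplication, additive for the Baer sum (via an explicit pullback--pushout diagram), and invertible. In your upper-triangular coordinates this is precisely the map that fixes the off-diagonal blocks $(E_1,E_2)$ while shifting the diagonal blocks, so the two constructions agree; what you do differently is trade the paper's Baer-sum bookkeeping for the standard identification of $\Ext^1_{\mathcal{REP}}$ with cocycles $(E_1,E_2)$ satisfying $B_1E_2+E_1C_2=B_2E_1+E_2C_1$ modulo coboundaries (your coboundary formula differs from the direct computation $E_i'-E_i=\phi C_i-B_i\phi$ only by replacing $\phi$ with $-\phi$, so the subspace is the same), after which the scalar shifts visibly cancel and $\Bbbk$-linearity is automatic. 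That identification, including the fact that the Baer sum corresponds to addition of the pairs $(E_1,E_2)$, is the one standard ingredient you cite rather than prove, whereas the paper proves the corresponding compatibilities by explicit diagrams. Your closing observation is the cleanest argument of all and does not appear in the paper: $x_1\mapsto x_1+\lambda$, $x_2\mapsto x_2+\mu$ is an automorphism of $\Bbbk[x_1,x_2]$, restriction along it is a $\Bbbk$-linear exact auto-equivalence of $\mathcal{REP}$ carrying $(V,C_1,C_2)$ to $(V,C_1+\lambda\mathrm{Id},C_2+\mu\mathrm{Id})$ and likewise for $(W,B_1,B_2)$, and such an equivalence induces isomorphisms on all $\Ext^1$ spaces, which proves the lemma in two lines.
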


\begin{proof}
{Let {$\xi$} be an element in $\Ext^1_\mathcal{REP}((V,C_1,C_2),(W,B_1,B_2))$. Then $\xi$ is} an short exact sequence {represented}  as follow% by choosing a suitable basis of $\Bbbk$-linear spaces
\[
\begin{tikzpicture}
\node (-2) at (-3,0) {$\xi:$};
\node (-2) at (-2,0) {0};
\node (-1) at (8,0) {0};
\node (2) at (0,0) {$W$};
\draw[->] (-2) --node[above ]{} (2);
\node (1) at (.8,.6) {$B_1$};
\node (1) at (0.7,-.6) {$B_2$};
\draw [->] (0.2,0.2) arc (-75:255:0.5)node[above]{};
\draw [->] (-0.2,-0.2) arc (105:425:0.5)node[below]{};
\node (3) at (1+2,0) {$W\oplus V$};
\node (1) at (1+2.8,.6) {$E_1$};
\node (1) at (3.7,-.6) {$E_2$};
\draw [->] (3.2,0.2) arc (-75:255:0.5)node[above]{};
\draw [->] (1+2-0.2,-0.2) arc (105:425:0.5)node[below]{};
\draw[->] (2) --node[above ]{$f$} (3);
\node (4) at (3+1+2,0) {$V$};
\draw[->] (4) --node[above ]{} (-1);
\node (1) at (1+3+2.8,.6) {$C_1$};
\node (1) at (6.7,-.6) {$C_2$};
\draw [->] (6.2,0.2) arc (-75:255:0.5)node[above]{};
\draw [->] (3+1+2-0.2,-0.2) arc (105:425:0.5)node[below]{};
\draw[->] (3) --node[above ]{$g$} (4);
\end{tikzpicture}
\]satisfying  $$B_1B_2=B_2B_1,C_1C_2=C_2C_1,E_1E_2=E_2E_1,$$
$$f B_1=E_1f,f B_2=E_2f,$$
$$gE_1=C_1g,gE_2=C_2g.$$ {It is easy to check that the following  short exact sequence $\zeta$} is an element in $\Ext^1_\mathcal{REP}((V,C'_1,C'_2),(W,B'_1,B'_2))$ \[
\begin{tikzpicture}
\node (-2) at (-3,0) {$\zeta$:};
\node (-2) at (-2,0) {0};
\node (-1) at (8,0) {0};
\node (2) at (0,0) {$W$};
\draw[->] (-2) --node[above ]{} (2);
\node (1) at (.8,.6) {$B_1+\lambda \mathrm{Id}$};
\node (1) at (0.7,-.6) {$B_2+\mu \mathrm{Id}$};
\draw [->] (0.2,0.2) arc (-75:255:0.5)node[above]{};
\draw [->] (-0.2,-0.2) arc (105:425:0.5)node[below]{};
\node (3) at (1+2,0) {$W\oplus V$};
\node (1) at (1+2.8,.6) {$E_1+\lambda \mathrm{Id}$};
\node (1) at (3.7,-.6) {$E_2+\mu \mathrm{Id}$};
\draw [->] (3.2,0.2) arc (-75:255:0.5)node[above]{};
\draw [->] (1+2-0.2,-0.2) arc (105:425:0.5)node[below]{};
\draw[->] (2) --node[above ]{$f$} (3);
\node (4) at (3+1+2,0) {$V$};
\draw[->] (4) --node[above ]{} (-1);
\node (1) at (1+3+2.8,.6) {$C_1+\lambda \mathrm{Id}$};
\node (1) at (6.7,-.6) {$C_2+\mu \mathrm{Id}$};
\draw [->] (6.2,0.2) arc (-75:255:0.5)node[above]{};
\draw [->] (3+1+2-0.2,-0.2) arc (105:425:0.5)node[below]{};
\draw[->] (3) --node[above ]{$g$} (4);
\end{tikzpicture}
\]
Hence {we can construct a corresponding} \begin{align*}
\psi:\Ext^1_\mathcal{REP}((V,C_1,C_2),(W,B_1,B_2))&\rightarrow \Ext^1_\mathcal{REP}((V,C'_1,C'_2),(W,B'_1,B'_2)),\\& {\xi\mapsto\zeta.}
\end{align*}We will prove that $\psi$ is an isomorphism of $\Bbbk$-vector spaces.

{Firstly, we claim that $\psi$ is well-defined.  In} $\Ext^1_\mathcal{REP}((V,C_1,C_2),(W,B_1,B_2))$, if $\xi_{1}=\xi_{2}$, it means that there exists a {isomorphism $h$ such that the following diagram is commutative} \[
\begin{tikzpicture}
\node (-2) at (-3,0) {$\xi_{1}:$};
\node (-2) at (-2,0) {0};
\node (-1) at (8,0) {0};
\node (2) at (0,0) {$W$};
\draw[->] (-2) --node[above ]{} (2);
\node (1) at (.8,.6) {$B_1$};
\node (1) at (0.7,-.6) {$B_2$};
\draw [->] (0.2,0.2) arc (-75:255:0.5)node[above]{};
\draw [->] (-0.2,-0.2) arc (105:425:0.5)node[below]{};
\node (3) at (1+2,0) {$W\oplus V$};
\node (1) at (1+2.8,.6) {$E_1$};
\node (1) at (3.7,-.6) {$E_2$};
\draw [->] (3.2,0.2) arc (-75:255:0.5)node[above]{};
\draw [->] (1+2-0.2,-0.2) arc (105:425:0.5)node[below]{};
\draw[->] (2) --node[above ]{$f$} (3);
\node (4) at (3+1+2,0) {$V$};
\draw[->] (4) --node[above ]{} (-1);
\node (1) at (1+3+2.8,.6) {$C_1$};
\node (1) at (6.7,-.6) {$C_2$};
\draw [->] (6.2,0.2) arc (-75:255:0.5)node[above]{};
\draw [->] (3+1+2-0.2,-0.2) arc (105:425:0.5)node[below]{};
\draw[->] (3) --node[above ]{$g$} (4);
\draw[-] (0,-.3)--(0,-2.7);
\draw[-] (0.1,-.3)--(0.1,-2.7);
\draw[-] (0+6,-.3)--(0+6,-2.7);
\draw[-] (0.1+6,-.3)--(0.1+6,-2.7);
\draw[->] (0+3,-.3)--node[right]{$h$}(0+3,-2.7);
\node (-2) at (-3,0-3) {$\xi_{2}:$};
\node (-2) at (-2,0-3) {0};
\node (-1) at (8,0-3) {0};
\node (2) at (0,0-3) {$W$};
\draw[->] (-2) --node[above ]{} (2);
\node (1) at (.8,.6-3) {$B_1$};
\node (1) at (0.7,-.6-3) {$B_2$};
\draw [->] (0.2,0.2-3) arc (-75:255:0.5)node[above]{};
\draw [->] (-0.2,-0.2-3) arc (105:425:0.5)node[below]{};
\node (3) at (1+2,0-3) {$W\oplus V$};
\node (1) at (1+2.8,.6-3) {${\hat E_1}$};
\node (1) at (3.7,-.6-3) {${\hat E_2}$};
\draw [->] (3.2,0.2-3) arc (-75:255:0.5)node[above]{};
\draw [->] (1+2-0.2,-0.2-3) arc (105:425:0.5)node[below]{};
\draw[->] (2) --node[above ]{${\hat f}$} (3);
\node (4) at (3+1+2,0-3) {$V$};
\draw[->] (4) --node[above ]{} (-1);
\node (1) at (1+3+2.8,.6-3) {$C_1$};
\node (1) at (6.7,-.6-3) {$C_2$};
\draw [->] (6.2,0.2-3) arc (-75:255:0.5)node[above]{};
\draw [->] (3+1+2-0.2,-0.2-3) arc (105:425:0.5)node[below]{};
\draw[->] (3) --node[above ]{${\hat g}$} (4);
\end{tikzpicture}
\]{From} the commutativity of the diagram above, we have\[
\begin{tikzpicture}
\node (-2) at (-3,0) {$\psi(\xi_{1}$):
};
\node (-2) at (-2,0) {0};
\node (-1) at (8,0) {0};
\node (2) at (0,0) {$W$};
\draw[->] (-2) --node[above ]{} (2);
\node (1) at (.8,.6) {$B_1+\lambda \mathrm{Id}$};
\node (1) at (0.7,-.6) {$B_2+\mu \mathrm{Id}$};
\draw [->] (0.2,0.2) arc (-75:255:0.5)node[above]{};
\draw [->] (-0.2,-0.2) arc (105:425:0.5)node[below]{};
\node (3) at (1+2,0) {$W\oplus V$};
\node (1) at (1+2.8,.6) {$E_1+\lambda \mathrm{Id}$};
\node (1) at (3.7,-.6) {$E_2+\mu \mathrm{Id}$};
\draw [->] (3.2,0.2) arc (-75:255:0.5)node[above]{};
\draw [->] (1+2-0.2,-0.2) arc (105:425:0.5)node[below]{};
\draw[->] (2) --node[above ]{$f$} (3);
\node (4) at (3+1+2,0) {$V$};
\draw[->] (4) --node[above ]{} (-1);
\node (1) at (1+3+2.8,.6) {$C_1+\lambda \mathrm{Id}$};
\node (1) at (6.7,-.6) {$C_2+\mu \mathrm{Id}$};
\draw [->] (6.2,0.2) arc (-75:255:0.5)node[above]{};
\draw [->] (3+1+2-0.2,-0.2) arc (105:425:0.5)node[below]{};
\draw[->] (3) --node[above ]{$g$} (4);
\draw[-] (0,-.3)--(0,-2.7);
\draw[-] (0.1,-.3)--(0.1,-2.7);
\draw[-] (0+6,-.3)--(0+6,-2.7);
\draw[-] (0.1+6,-.3)--(0.1+6,-2.7);
\draw[->] (0+3,-.3)--node[right]{$h$}(0+3,-2.7);
\node (-2) at (-3,0-3) {$\psi(\xi_{2}):$};
\node (-2) at (-2,0-3) {0};
\node (-1) at (8,0-3) {0};
\node (2) at (0,0-3) {$W$};
\draw[->] (-2) --node[above ]{} (2);
\node (1) at (.8,.6-3) {$B_1+\lambda \mathrm{Id}$};
\node (1) at (0.7,-.6-3) {$B_2+\mu \mathrm{Id}$};
\draw [->] (0.2,0.2-3) arc (-75:255:0.5)node[above]{};
\draw [->] (-0.2,-0.2-3) arc (105:425:0.5)node[below]{};
\node (3) at (1+2,0-3) {$W\oplus V$};
\node (1) at (1+2.8,.6-3) {${\hat E_1}+\lambda \mathrm{Id}$};
\node (1) at (3.7,-.6-3) {${\hat E_2}+\mu \mathrm{Id}$};
\draw [->] (3.2,0.2-3) arc (-75:255:0.5)node[above]{};
\draw [->] (1+2-0.2,-0.2-3) arc (105:425:0.5)node[below]{};
\draw[->] (2) --node[above ]{${\hat f}$} (3);
\node (4) at (3+1+2,0-3) {$V$};
\draw[->] (4) --node[above ]{} (-1);
\node (1) at (1+3+2.8,.6-3) {$C_1+\lambda \mathrm{Id}$};
\node (1) at (6.7,-.6-3) {$C_2+\mu \mathrm{Id}$};
\draw [->] (6.2,0.2-3) arc (-75:255:0.5)node[above]{};
\draw [->] (3+1+2-0.2,-0.2-3) arc (105:425:0.5)node[below]{};
\draw[->] (3) --node[above ]{${\hat g}$} (4);
\end{tikzpicture}
\]
{is commutative. It follows that $\psi(\xi_{1})=\psi(\xi_{2})$.}

{Secondly,} $\psi$ keeps the scalar-multiplication. In fact, for $0\ne a\in\Bbbk$, {we have $a\cdot\psi(\xi)=\psi(a\cdot\xi$)} is the short exact sequence as follow\[
\begin{tikzpicture}
\node (-2) at (-2,0) {0};
\node (-1) at (8,0) {0};
\node (2) at (0,0) {$W$};
\draw[->] (-2) --node[above ]{} (2);
\node (1) at (.8,.6) {$B_1+\lambda \mathrm{Id}$};
\node (1) at (0.7,-.6) {$B_2+\mu \mathrm{Id}$};
\draw [->] (0.2,0.2) arc (-75:255:0.5)node[above]{};
\draw [->] (-0.2,-0.2) arc (105:425:0.5)node[below]{};
\node (3) at (1+2,0) {$W\oplus V$};
\node (1) at (1+2.8,.6) {$E_1+\lambda \mathrm{Id}$};
\node (1) at (3.7,-.6) {$E_2+\mu \mathrm{Id}$};
\draw [->] (3.2,0.2) arc (-75:255:0.5)node[above]{};
\draw [->] (1+2-0.2,-0.2) arc (105:425:0.5)node[below]{};
\draw[->] (2) --node[above ]{$f$} (3);
\node (4) at (3+1+2,0) {$V$};
\draw[->] (4) --node[above ]{} (-1);
\node (1) at (1+3+2.8,.6) {$C_1+\lambda \mathrm{Id}$};
\node (1) at (6.7,-.6) {$C_2+\mu \mathrm{Id}$};
\draw [->] (6.2,0.2) arc (-75:255:0.5)node[above]{};
\draw [->] (3+1+2-0.2,-0.2) arc (105:425:0.5)node[below]{};
\draw[->] (3) --node[above right]{$\dfrac{1}{a}g$} (4);
\end{tikzpicture}
\]

{Furthermore, $\psi$ keeps the addition. In fact, assume $\xi_{1}, \xi_{2}$ are two elements in $\Ext^1_\mathcal{REP}((V,C_1,C_2),(W,B_1,B_2))$. Then the sum of $\xi_{1}, \xi_{2}$} can be expressed to be the Baer sum\[
\xi_{1}+\xi_{2}=(1,1)(\xi_{1}\oplus \xi_{2})\begin{pmatrix}
1\\1
\end{pmatrix}
\]which is the pull-back of the push-out (or the push-out of the pull-back). In other words, we have the {commutative diagram} as follow\[
\begin{tikzpicture}
\node (-2) at (.1-3.5,0) {$\xi_{1}\oplus \xi_{_{2}}:$};
\node (-2) at (-2,0) {0};
\node (-1) at (8,0) {0};
\node (2) at (0,0) {$W\oplus W$};
\draw[->] (-2) --node[above ]{} (2);
\node (1) at (.8,.6) {diag($B_1,{B_1}$)};
\node (1) at (0.7,-.8) {diag($B_2,{B_2}$)};
\draw [->] (0.2,0.2) arc (-75:255:0.5)node[above]{};
\draw [->] (-0.2,-0.2) arc (105:425:0.5)node[below]{};
\node (3) at (1+2,0) {$W\oplus V\oplus W\oplus V$};
\node (1) at (1+2.8,.6) {diag($E_1,{\hat E_1}$)};
\node (1) at (3.7,-.6) {diag($E_2,{\hat E_2}$)};
\draw [->] (3.2,0.2) arc (-75:255:0.5)node[above]{};
\draw [->] (1+2-0.2,-0.2) arc (105:425:0.5)node[below]{};
\draw[->] (2) --node[below]{diag($f,{\hat f}$)} (3);
\node (4) at (3+1+2,0) {$V\oplus V$};
\draw[->] (4) --node[above ]{} (-1);
\node (1) at (1+3+2.8,.6) {diag($C_1,{C_1}$)};
\node (1) at (6.7,-.6) {diag($C_2,{C_2}$)};
\draw [->] (6.2,0.2) arc (-75:255:0.5)node[above]{};
\draw [->] (3+1+2-0.2,-0.2) arc (105:425:0.5)node[below]{};
\draw[->] (3) --node[above ]{diag($g,{\hat g}$)} (4);
\draw[->] (0,-.3)--node[left]{$(1,1)$}(0,-2.7);
\draw[-] (0+6,-.3)--(0+6,-2.7);
\draw[-] (0.1+6,-.3)--(0.1+6,-2.7);
\draw[->] (0+3,-.3)--node[right]{}(0+3,-2.7);
\node (-2) at (.1-3,0.5-3) {$(1,1)(\xi_{1}\oplus \xi_{2}):$};
\node (-2) at (-2,0-3) {0};
\node (-1) at (8,0-3) {0};
\node (2) at (0,0-3) {$W$};
\draw[->] (-2) --node[above ]{} (2);
\node (1) at (.8,.6-3) {$B_1$};
\node (1) at (0.7,-.6-3) {$B_2$};
\draw [->] (0.2,0.2-3) arc (-75:255:0.5)node[above]{};
\draw [->] (-0.2,-0.2-3) arc (105:425:0.5)node[below]{};
\node (3) at (1+2,0-3) {$W\oplus V\oplus V$};
\node (1) at (1+2.8,.6-3) {};
\node (1) at (3.7,-.6-3) {};
\draw [->] (3.2,0.2-3) arc (-75:255:0.5)node[above]{};
\draw [->] (1+2-0.2,-0.2-3) arc (105:425:0.5)node[below]{};
\draw[->] (2) --node[above ]{} (3);
\node (4) at (3+1+2,0-3) {$V\oplus V$};
\draw[->] (4) --node[above ]{} (-1);
\node (1) at (1+3+2.8,.6-3) {diag($C_1,{C_1}$)};
\node (1) at (6.7,-.6-3) {diag($C_2,{C_2}$)};
\draw [->] (6.2,0.2-3) arc (-75:255:0.5)node[above]{};
\draw [->] (3+1+2-0.2,-0.2-3) arc (105:425:0.5)node[below]{};
\draw[->] (3) --node[above ]{} (4);
\draw[-] (0,-.3-3)--node[left]{}(0,-2.7-3);
\draw[-] (0.1,-.3-3)--node[left]{}(0.1,-2.7-3);
\draw[<-] (0.1+6,-.3-3)--node[right]{$(1,1)^T$}(0.1+6,-2.7-3);
\draw[<-] (0+3,-.3-3)--node[right]{}(0+3,-2.7-3);
\node (-2) at (.5-3,0.7-6) {$(1,1)(\xi_{1}\oplus \xi_{2})\begin{pmatrix}
	1\\1
	\end{pmatrix}:$};
\node (-2) at (-2,0-6) {0};
\node (-1) at (8,0-6) {0.};
\node (2) at (0,0-6) {$W$};
\draw[->] (-2) --node[above ]{} (2);
\node (1) at (.8,.6-6) {$B_1$};
\node (1) at (0.7,-.6-6) {$B_2$};
\draw [->] (0.2,0.2-6) arc (-75:255:0.5)node[above]{};
\draw [->] (-0.2,-0.2-6) arc (105:425:0.5)node[below]{};
\node (3) at (1+2,0-6) {$W\oplus V$};
\node (1) at (1+2.8,.6-6) {};
\node (1) at (3.7,-.6-6) {};
\draw [->] (3.2,0.2-6) arc (-75:255:0.5)node[above]{};
\draw [->] (1+2-0.2,-0.2-6) arc (105:425:0.5)node[below]{};
\draw[->] (2) --node[above ]{} (3);
\node (4) at (3+1+2,0-6) {$V$};
\draw[->] (4) --node[above ]{} (-1);
\node (1) at (1+3+2.8,.6-6) {$C_1$};
\node (1) at (6.7,-.6-6) {$C_2$};
\draw [->] (6.2,0.2-6) arc (-75:255:0.5)node[above]{};
\draw [->] (3+1+2-0.2,-0.2-6) arc (105:425:0.5)node[below]{};
\draw[->] (3) --node[above ]{} (4);
\end{tikzpicture}
\] {Notice that the commutativity can be preserved by $\psi$. We have} \[
(1,1)\psi(\xi_{1}\oplus \xi_{2})\begin{pmatrix}
1\\1
\end{pmatrix}=\psi((1,1)(\xi_{1}\oplus \xi_{2})\begin{pmatrix}
1\\1
\end{pmatrix}).
\]

{Finally, $\psi$ is invertible.  In fact, an element $\zeta$} in $\Ext^1_\mathcal{REP}((V,C'_1,C'_2),(W,B'_1,B'_2))$ is an short exact sequence
\[\begin{tikzpicture}
\node (-2) at (-3,0) {$\zeta$:};
\node (-2) at (-2,0) {0};
\node (-1) at (8,0) {0.};
\node (2) at (0,0) {$W$};
\draw[->] (-2) --node[above ]{} (2);
\node (1) at (.8,.6) {$B'_1$};
\node (1) at (0.7,-.6) {$B'_2$};
\draw [->] (0.2,0.2) arc (-75:255:0.5)node[above]{};
\draw [->] (-0.2,-0.2) arc (105:425:0.5)node[below]{};
\node (3) at (1+2,0) {$W\oplus V$};
\node (1) at (1+2.8,.6) {$E'_1$};
\node (1) at (3.7,-.6) {$E'_2$};
\draw [->] (3.2,0.2) arc (-75:255:0.5)node[above]{};
\draw [->] (1+2-0.2,-0.2) arc (105:425:0.5)node[below]{};
\draw[->] (2) --node[above ]{$f'$} (3);
\node (4) at (3+1+2,0) {$V$};
\draw[->] (4) --node[above ]{} (-1);
\node (1) at (1+3+2.8,.6) {$C’_1$};
\node (1) at (6.7,-.6) {$C’_2$};
\draw [->] (6.2,0.2) arc (-75:255:0.5)node[above]{};
\draw [->] (3+1+2-0.2,-0.2) arc (105:425:0.5)node[below]{};
\draw[->] (3) --node[above ]{$g'$} (4);
\end{tikzpicture}
\] {We can construct a corresponding \begin{align*}
\varphi: \Ext^1_\mathcal{REP}((V,C'_1,C'_2),(W,B'_1,B'_2))&\rightarrow \Ext^1_\mathcal{REP}((V,C_1,C_2),(W,B_1,B_2)), \\&\zeta\mapsto\xi,
\end{align*} where $\xi$ is the short exact sequence represented as follow} \[
\begin{tikzpicture}
\node (-2) at (-3,0) {$\xi$:};
\node (-2) at (-2,0) {0};
\node (-1) at (8,0) {0.};
\node (2) at (0,0) {$W$};
\draw[->] (-2) --node[above ]{} (2);
\node (1) at (.8,.6) {$B’_1-\lambda \mathrm{Id}$};
\node (1) at (0.7,-.6) {$B’_2-\mu \mathrm{Id}$};
\draw [->] (0.2,0.2) arc (-75:255:0.5)node[above]{};
\draw [->] (-0.2,-0.2) arc (105:425:0.5)node[below]{};
\node (3) at (1+2,0) {$W\oplus V$};
\node (1) at (1+2.8,.6) {$E'_1-\lambda \mathrm{Id}$};
\node (1) at (3.7,-.6) {$E'_2-\mu \mathrm{Id}$};
\draw [->] (3.2,0.2) arc (-75:255:0.5)node[above]{};
\draw [->] (1+2-0.2,-0.2) arc (105:425:0.5)node[below]{};
\draw[->] (2) --node[above ]{$f'$} (3);
\node (4) at (3+1+2,0) {$V$};
\draw[->] (4) --node[above ]{} (-1);
\node (1) at (1+3+2.8,.6) {$C’_1-\lambda \mathrm{Id}$};
\node (1) at (6.7,-.6) {$C’_2-\mu \mathrm{Id}$};
\draw [->] (6.2,0.2) arc (-75:255:0.5)node[above]{};
\draw [->] (3+1+2-0.2,-0.2) arc (105:425:0.5)node[below]{};
\draw[->] (3) --node[above ]{$g'$} (4);
\end{tikzpicture}
\]
It is easy to verify that $\varphi$ is the inverse of $\psi$.
Therefore, $\psi$ is an isomorphism of $\Bbbk$-linear spaces.
\end{proof}

\begin{lemma}
\label{lem7.3}
For two bricks $(\Bbbk,\lambda_1,\lambda_2),(\Bbbk,\mu_1,\mu_2)\in \mathcal{REP}$, there is a nontrivial extension between them if and only if $\lambda_1=\mu_1,\lambda_2=\mu_2$. In this case ,we have\[
\dim \Ext_\mathcal{REP}^1((\Bbbk,\lambda_1,\lambda_2),(\Bbbk,\lambda_1,\lambda_2))=2.
\]
\end{lemma}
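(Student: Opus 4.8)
The plan is to use Lemma~\ref{lem7.2} to normalize the problem and then compute the relevant $\Ext$ group directly from short exact sequences. First I would apply Lemma~\ref{lem7.2} with the shift $(\lambda,\mu)=(-\lambda_1,-\lambda_2)$ to both bricks simultaneously, obtaining
$$
\Ext^1_\mathcal{REP}\big((\Bbbk,\lambda_1,\lambda_2),(\Bbbk,\mu_1,\mu_2)\big)\;\cong\;\Ext^1_\mathcal{REP}\big((\Bbbk,0,0),(\Bbbk,a,b)\big),\qquad a:=\mu_1-\lambda_1,\ b:=\mu_2-\lambda_2.
$$
Since $a=b=0$ is equivalent to $\lambda_1=\mu_1,\ \lambda_2=\mu_2$, it suffices to prove that this last space is $2$-dimensional when $a=b=0$ and vanishes otherwise.

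Next I would describe an arbitrary extension $0\to(\Bbbk,a,b)\xrightarrow{f}(W,E_1,E_2)\xrightarrow{g}(\Bbbk,0,0)\to0$ concretely. As $W$ is two-dimensional and $f,g$ fix a flag, after choosing a vector-space splitting $W=\Bbbk\oplus\Bbbk$ with $f$ the first inclusion and $g$ the second projection, the morphism conditions on $f$ and $g$ force
$$
E_1=\begin{pmatrix} a & p\\ 0 & 0\end{pmatrix},\qquad E_2=\begin{pmatrix} b & q\\ 0 & 0\end{pmatrix},\qquad p,q\in\Bbbk,
$$
and the single remaining relation $E_1E_2=E_2E_1$ becomes $aq=bp$. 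So the set of such extensions is the subset $\{(p,q)\in\Bbbk^2 : aq=bp\}$. Then I would identify the equivalence relation: an isomorphism of extensions inducing the identity on sub and quotient must be unipotent, $h=\left(\begin{smallmatrix}1 & t\\ 0 & 1\end{smallmatrix}\right)$, and compatibility with $E_1,E_2$ forces the corresponding data to transform by $(p,q)\mapsto(p-at,\,q-bt)$. Hence $\Ext^1_\mathcal{REP}\big((\Bbbk,0,0),(\Bbbk,a,b)\big)$ is the quotient of $\{(p,q):aq=bp\}$ by the line $\{(at,bt):t\in\Bbbk\}$.

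The case analysis is then immediate: if $(a,b)=(0,0)$ the solution set is all of $\Bbbk^2$ and the subtracted line is $0$, giving dimension $2$; if $(a,b)\neq(0,0)$ the solution set is the one-dimensional line $\{(p,q):aq=bp\}$, which coincides with $\{(at,bt):t\in\Bbbk\}$, so the quotient is $0$. Translating back via the isomorphism of the first paragraph yields both assertions of the lemma.

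The bookkeeping here is routine; the one point deserving care is the justification that every self-equivalence of an extension in $\mathcal{REP}$ has the claimed unipotent form and that these relations are exhaustive, so that the naive quotient of $(p,q)$-space genuinely computes $\Ext^1$. An alternative and perhaps cleaner route, which I would at least remark on, is to observe that $(\Bbbk,\lambda_1,\lambda_2)$ is the point module $\Bbbk[x_1,x_2]/(x_1-\lambda_1,x_2-\lambda_2)$, resolve it by the Koszul complex on $(x_1-\lambda_1,x_2-\lambda_2)$, and read $\Ext^1((\Bbbk,\lambda_1,\lambda_2),(\Bbbk,\mu_1,\mu_2))$ off the three-term complex $\Bbbk_\mu\to\Bbbk_\mu^2\to\Bbbk_\mu$ whose two differentials are multiplication by $(\mu_1-\lambda_1,\mu_2-\lambda_2)$ and its Koszul dual; both differentials vanish exactly when $\mu=\lambda$, giving $\Bbbk^2$, and otherwise the image and kernel in the middle term are equal one-dimensional subspaces, giving $0$. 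I would present the elementary computation to stay in the spirit of the preceding lemmas and mention the Koszul viewpoint as a sanity check.
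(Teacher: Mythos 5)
Your proposal is correct and follows essentially the same route as the paper: normalize with Lemma \ref{lem7.2} so the quotient brick becomes $(\Bbbk,0,0)$, write the middle term of an extension via $2\times 2$ matrices constrained by the morphism and commutativity conditions, and use unipotent isomorphisms of extensions to detect splitting. Your version is in fact slightly more complete, since the explicit description $\{(p,q):aq=bp\}/\{(at,bt):t\in\Bbbk\}$ simultaneously handles the vanishing case and makes the $2$-dimensionality of the self-extension space explicit, a point the paper only asserts after reducing to $\Ext^1_{\mathcal{REP}}((\Bbbk,0,0),(\Bbbk,0,0))$.
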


\begin{proof}
By Lemma \ref{lem7.2}, we can assume $\lambda_1=\lambda_2=0$. {If $\Ext_\mathcal{REP}^1((\Bbbk,0,0),(\Bbbk,\mu_1,\mu_2))$ is non-zero, then there exists $0\neq \xi$ in $\Ext_\mathcal{REP}^1((\Bbbk,0,0),(\Bbbk,\mu_1,\mu_2))$. By choosing a suitable basis of $\Bbbk$-linear spaces, $\xi$ can be represented as} the following commutative diagram\[
\begin{tikzpicture}
\node (2) at (-3,0) {$\xi:$};
\node (2) at (0,0) {$\Bbbk$};
\node (1) at (.8,.6) {$\mu_1$};
\node (1) at (0.7,-.6) {$\mu_2$};
\draw [->] (0.2,0.2) arc (-75:255:0.5)node[above]{};
\draw [->] (-0.2,-0.2) arc (105:425:0.5)node[below]{};
\node (3) at (1+2,0) {$\Bbbk^2$};
\node (1) at (1+2.8,.6) {$E_1$};
\node (1) at (3.7,-.6) {$E_2$};
\draw [->] (3.2,0.2) arc (-75:255:0.5)node[above]{};
\draw [->] (1+2-0.2,-0.2) arc (105:425:0.5)node[below]{};
\draw[->] (2) --node[above ]{$(1,0)^T$} (3);
\node (4) at (3+1+2,0) {$\Bbbk$};
\node (1) at (1+3+2.8,.6) {$0$};
\node (1) at (6.7,-.6) {$0$};
\draw [->] (6.2,0.2) arc (-75:255:0.5)node[above]{};
\draw [->] (3+1+2-0.2,-0.2) arc (105:425:0.5)node[below]{};
\draw[->] (3) --node[above ]{$(0,1)$} (4);
\node (-2) at (-2,0) {0};
\node (-1) at (8,0) {0};
\draw[->] (-2) --node[above ]{} (2);
\draw[->] (4) --node[above ]{} (-1);
\end{tikzpicture}
\]which implies that there exist $\mu_3,\mu_4\in\Bbbk$ such that $E_1=\begin{pmatrix}
    \mu_1&\mu_3\\0&0
\end{pmatrix}$ and $E_2=\begin{pmatrix}
    \mu_2&\mu_4\\0&0
\end{pmatrix}$.  Since $E_1E_2=E_2E_1$, we have $\mu_4\mu_1=\mu_2\mu_3$.

{If $\mu_1\ne0$, then $\mu_4=\mu_2\mu_3/\mu_1$ and $\begin{pmatrix}
    1&\mu_3/\mu_1\\0 &1
\end{pmatrix}: \Bbbk^2\rightarrow \Bbbk^2$ is an isomorphism. We have the following commutative diagram \[
\begin{tikzpicture}
\node (2) at (0,0) {$\Bbbk$};
\node (1) at (.8,.6) {$\mu_1$};
\node (1) at (0.7,-.6) {$\mu_2$};
\draw [->] (0.2,0.2) arc (-75:255:0.5)node[above]{};
\draw [->] (-0.2,-0.2) arc (105:425:0.5)node[below]{};
\node (3) at (1+2,0) {$\Bbbk^2$};
\node (1) at (1+2.8+.3,.8+.6) {$\begin{pmatrix}
	\mu_1&\mu_3\\0&0
	\end{pmatrix}$};
\node (1) at (4.2,-.8) {$\begin{pmatrix}
	\mu_2&\mu_2\mu_3/\mu_1\\0&0
	\end{pmatrix}$};
\draw [->] (3.2,0.2) arc (-75:255:0.5)node[above]{};
\draw [->] (1+2-0.2,-0.2) arc (105:425:0.5)node[below]{};
\draw[->] (2) --node[above ]{$(1,0)^T$} (3);
\node (4) at (3+1+2,0) {$\Bbbk$};
\node (1) at (1+3+2.8,.6) {$0$};
\node (1) at (6.7,-.6) {$0$};
\draw [->] (6.2,0.2) arc (-75:255:0.5)node[above]{};
\draw [->] (3+1+2-0.2,-0.2) arc (105:425:0.5)node[below]{};
\draw[->] (3) --node[above ]{$(0,1)$} (4);
\node (-2) at (-2,0) {0};
\node (-1) at (8,0) {0};
\draw[->] (-2) --node[above ]{} (2);
\draw[->] (4) --node[above ]{} (-1);

\node (2) at (0,0-3) {$\Bbbk$};
\node (1) at (.8,.6-3) {$\mu_1$};
\node (1) at (0.7,-.6-3) {$\mu_2$};
\draw [->] (0.2,0.2-3) arc (-75:255:0.5)node[above]{};
\draw [->] (-0.2,-0.2-3) arc (105:425:0.5)node[below]{};
\node (3) at (1+2,0-3) {$\Bbbk^2$};
\node (1) at (1+2+1.1,1.3-3.1) {$\begin{pmatrix}
    \mu_1&0\\0&0
\end{pmatrix}$};
\node (1) at (3.7,-1.6-3) {$\begin{pmatrix}
    \mu_2&0\\0&0
\end{pmatrix}$};
\draw [->] (3.2,0.2-3) arc (-75:255:0.5)node[above]{};
\draw [->] (1+2-0.2,-0.2-3) arc (105:425:0.5)node[below]{};
\draw[->] (2) --node[above ]{$(1,0)^T$} (3);
\node (4) at (3+1+2,0-3) {$\Bbbk$};
\node (1) at (1+3+2.8,.6-3) {$0$};
\node (1) at (6.7,-.6-3) {$0$.};
\draw [->] (6.2,0.2-3) arc (-75:255:0.5)node[above]{};
\draw [->] (3+1+2-0.2,-0.2-3) arc (105:425:0.5)node[below]{};
\draw[->] (3) --node[above ]{$(0,1)$} (4);
\node (-2) at (-2,0-3) {0};
\node (-1) at (8,0-3) {0};
\draw[->] (-2) --node[above ]{} (2);
\draw[->] (4) --node[above ]{} (-1);
\draw[-] (0,-.3)--(0,-2.7);
\draw[-] (0.1,-.3)--(0.1,-2.7);
\draw[-] (0+6,-.3)--(0+6,-2.7);
\draw[-] (0.1+6,-.3)--(0.1+6,-2.7);
\draw[->] (0+3,-.3)--node[left]{$\begin{pmatrix}
	1&{\mu_3}/{\mu_1}\\0&1
	\end{pmatrix}$}(0+3,-2.7);
\end{tikzpicture}
\]It follows that $\xi=0$, which is a contradiction to $\xi\neq0$. Hence $\mu_1=0$. Similarly, we can prove that $\mu_2=0$.

On the other hand, if $\mu_1=0$ and $\mu_2=0$, then by Lemma \ref{lem7.2} we have $$\Ext_\mathcal{REP}^1((\Bbbk,\lambda_1,\lambda_2),(\Bbbk,\lambda_1,\lambda_2))\cong \Ext_\mathcal{REP}^1((\Bbbk,0,0),(\Bbbk,0,0))\cong \Bbbk^2.$$}
\end{proof}

\begin{theorem}
We have\[
\fpdim(\mathcal{REP})=2.
\]
\end{theorem}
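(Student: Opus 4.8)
The plan is to compute the adjacency matrix of an arbitrary brick set in $\mathcal{REP}$ explicitly from Lemmas \ref{lem7.1}, \ref{lem7.2} and \ref{lem7.3}, and to observe that it is always a scalar matrix, so that every testing object contributes spectral radius exactly $2$.

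First I would fix $n\ge 1$ and an arbitrary brick set $\phi=\{M_1,\dots,M_n\}\in\Phi_{n,b}$. By Lemma \ref{lem7.1} each $M_i$ is one-dimensional, say $M_i=(\Bbbk,\lambda_i,\mu_i)$ with $\lambda_i,\mu_i\in\Bbbk$; moreover the members of a brick set are pairwise non-isomorphic, since $\dim\Hom_{\mathcal{REP}}(M_i,M_j)=\delta_{ij}$, and two one-dimensional representations of $\Bbbk[x_1,x_2]$ are isomorphic if and only if their scalar pairs coincide (any automorphism of $\Bbbk$ is scalar multiplication, which commutes with everything). Applying Lemma \ref{lem7.3}, I get $\Ext^1_{\mathcal{REP}}(M_i,M_j)=0$ whenever $i\ne j$ and $\dim\Ext^1_{\mathcal{REP}}(M_i,M_i)=2$ for every $i$. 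Hence $C(\phi)=2I_n$, so $\rho(C(\phi))=2$.

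Next I would record that $\Phi_{n,b}$ is nonempty for every $n$: the representations $(\Bbbk,0,1),(\Bbbk,0,2),\dots,(\Bbbk,0,n)$ are pairwise non-isomorphic bricks, and by Lemma \ref{lem7.1} any family of pairwise distinct bricks forms a brick set. Combining this with the previous paragraph gives $\fpdim^n(\mathcal{REP})=2$ for all $n\ge 1$, and therefore $\fpdim(\mathcal{REP})=\sup_n\fpdim^n(\mathcal{REP})=2$, which is the claim.

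I do not expect a genuine obstacle here: all of the substance has already been front-loaded into the three preceding lemmas --- Lemma \ref{lem7.1} identifies the bricks and forces $\Hom$ to vanish between distinct ones, Lemma \ref{lem7.2} normalizes Ext computations to the zero-scalar case, and Lemma \ref{lem7.3} simultaneously shows that distinct (hence non-isomorphic) bricks admit no nontrivial extension and that each self-extension space is $2$-dimensional. The only points deserving a word of care are that brick sets of every cardinality exist, so that $\fpdim^n$ is not vacuously $0$ for some $n$, and that the vanishing of the off-diagonal entries of $C(\phi)$ genuinely uses non-isomorphism of the distinct members of $\phi$, as supplied by Lemma \ref{lem7.3}, rather than merely the $\Hom$-orthogonality of Lemma \ref{lem7.1}.
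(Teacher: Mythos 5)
Your proposal is correct and follows the same route as the paper, which simply cites Lemma \ref{lem7.1} and Lemma \ref{lem7.3}: bricks are one-dimensional, distinct bricks in a brick set are non-isomorphic so all off-diagonal $\Ext^1$'s vanish, each self-extension space is $2$-dimensional, hence every adjacency matrix is $2I_n$ and the supremum is $2$. You merely spell out the details (including the existence of brick sets of each size) that the paper leaves implicit.
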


\begin{proof}
The conclusion is directly obtained by Lemma \ref{lem7.1} and Lemma \ref{lem7.3}.
\end{proof}

\begin{corollary}
For the representation category $\mathcal{REP}_r=\mathrm{rep}\ \Bbbk[x_1,x_2,\cdots,x_r]$, we have\[
\fpdim(\mathcal{REP}_r)=r.
\]
\end{corollary}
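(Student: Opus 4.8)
The plan is to repeat, with $r$ variables in place of $2$, the three lemmas of this section and then simply read off the adjacency matrix of an arbitrary brick set. Recall that a representation in $\mathcal{REP}_r$ is a tuple $(V,C_1,\dots,C_r)$ with $V$ a finite-dimensional $\Bbbk$-space and $C_1,\dots,C_r$ pairwise commuting endomorphisms of $V$, a morphism being a linear map intertwining all the $C_i$.

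\textbf{Bricks and Homs.} First I would prove the analogue of Lemma~\ref{lem7.1}: a representation $(V,C_1,\dots,C_r)$ is a brick if and only if $\dim V=1$, and there is no non-zero morphism between two non-isomorphic bricks. The argument is verbatim the $r=2$ case: each $C_i$ is an endomorphism of a brick, hence a scalar; once all $C_i$ are scalar any vector-space decomposition of $V$ splits the representation, forcing $\dim V=1$; and a scalar intertwiner between $(\Bbbk,\lambda_1,\dots,\lambda_r)$ and $(\Bbbk,\mu_1,\dots,\mu_r)$ is non-zero only when $\lambda_i=\mu_i$ for all $i$. Consequently every finite set of pairwise non-isomorphic $1$-dimensional representations is a brick set, and (since $\Bbbk$ is infinite) $\Phi_{n,b}\neq\emptyset$ for every $n\ge 1$.

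\textbf{Shift invariance and the key Ext computation.} Next I would establish the analogue of Lemma~\ref{lem7.2}: for any $\lambda=(\lambda_1,\dots,\lambda_r)\in\Bbbk^r$, simultaneously replacing $C_i$ by $C_i+\lambda_i\mathrm{Id}$ on both the sub- and quotient module induces an isomorphism of extension groups, by the same explicit bijection on short exact sequences (translation by scalars is compatible with pull-back, push-out and Baer sum). Using this I may normalise both bricks to $(\Bbbk,0,\dots,0)$. The heart of the proof is then the analogue of Lemma~\ref{lem7.3}: for two bricks there is a non-split extension only when their parameter tuples agree, and in that case
\[
\dim\Ext^1_{\mathcal{REP}_r}\big((\Bbbk,0,\dots,0),(\Bbbk,0,\dots,0)\big)=r.
\]
I would prove this directly: an extension of $(\Bbbk,0,\dots,0)$ by $(\Bbbk,\mu_1,\dots,\mu_r)$ is given by matrices $E_i=\left(\begin{smallmatrix}\mu_i&\nu_i\\0&0\end{smallmatrix}\right)$; commutativity forces $\mu_i\nu_j=\mu_j\nu_i$ for all $i,j$, so if some $\mu_k\neq0$ then $\nu_i=(\nu_k/\mu_k)\mu_i$ and the change of basis $\left(\begin{smallmatrix}1&\nu_k/\mu_k\\0&1\end{smallmatrix}\right)$ splits the sequence. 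When all $\mu_i=0$ the $E_i$ commute automatically, the only self-equivalences of the sequence are the unipotent ones $\left(\begin{smallmatrix}1&c\\0&1\end{smallmatrix}\right)$, which fix the tuple $(\nu_1,\dots,\nu_r)$, and the split sequence corresponds to $(0,\dots,0)$; hence $\xi\mapsto(\nu_1,\dots,\nu_r)$ is a $\Bbbk$-linear bijection onto $\Bbbk^r$. (Alternatively one can invoke the Koszul resolution of the residue field, giving $\Ext^i_{\Bbbk[x_1,\dots,x_r]}(\Bbbk,\Bbbk)\cong\Lambda^i(\Bbbk^r)^{*}$ and in particular $\dim\Ext^1=r$.)

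\textbf{Assembling the answer and the main difficulty.} Finally, let $\phi=\{M_1,\dots,M_n\}$ be any brick set in $\mathcal{REP}_r$. By the first step each $M_i$ is a $1$-dimensional representation, and by the key computation $\dim\Ext^1(M_i,M_j)=0$ for $i\neq j$ while $\dim\Ext^1(M_i,M_i)=r$; hence $C(\phi)=r\,I_n$, whose spectral radius is exactly $r$. Therefore $\fpdim^n(\mathcal{REP}_r)=r$ for every $n\ge1$, so $\fpdim(\mathcal{REP}_r)=r$. The only real content is the Ext computation in the third step; the one place needing genuine care is checking that in the $r$-variable setting the commutation relations $\mu_i\nu_j=\mu_j\nu_i$ are vacuous precisely when all $\mu_i=0$, so that the $r$ parameters $\nu_i$ are indeed independent and do not collapse to fewer, while everything else is a routine transcription of the $r=2$ arguments already in the paper.
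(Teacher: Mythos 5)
Your proposal is correct and follows essentially the route the paper intends: the corollary is left as the evident generalization of Lemmas \ref{lem7.1}--\ref{lem7.3} from two commuting operators to $r$, and you carry out exactly that transcription, including the key check that the commutation relations $\mu_i\nu_j=\mu_j\nu_i$ force splitting unless all $\mu_i=0$, so that every brick set has adjacency matrix $r\,I_n$ and $\fpdim(\mathcal{REP}_r)=r$. The Koszul-resolution remark is a nice independent confirmation of $\dim\Ext^1(\Bbbk,\Bbbk)=r$ but is not needed beyond the elementary matrix computation already in the paper's $r=2$ argument.
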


{

\begin{remark}
The above result shows that there also exist infinite dimensional algebras whose Frobenius-Perron dimension is equal to the maximal number of loops. So the further study on the characteristics of these infinite dimensional algebras are meaningful and challenging.
\end{remark}}

\noindent {\bf Acknowledgements.}\quad
This work is supported by the National Natural Science Foundation of China (Grant Nos. 11971398, 12131018 and 12161141001).

%\vspace{5mm}

%\bibliography{}

\vskip 5pt
\noindent {\tiny  \noindent Jianmin Chen, Jiayi Chen\\
School of Mathematical Sciences, \\
Xiamen University, Xiamen, 361005, Fujian, PR China.\\
E-mails: chenjianmin@xmu.edu.cn, jiayichen.xmu@foxmail.com}
\vskip 3pt

\end{document}